\numberwithin{equation}{section}
\numberwithin{figure}{section}
\newtheorem{thm}{Theorem}[section]
\newtheorem{prop}[thm]{Proposition}
\newtheorem{lem}[thm]{Lemma}
\newtheorem{coro}[thm]{Corollary}
\theoremstyle{definition}
\newtheorem{defx}[thm]{Definition}
\newtheorem{rem}[thm]{Remark}
\newcommand{\R}{I\!\!R}
\title{Groupoid Characterization of Partial Algebras on Sobolev Spaces}
\author{{\emph{N. O. Okeke \;  Email: nickokeke@dui.edu.ng, }}
\\ Physical and Mathematical Sciences, Dominican University, Ibadan \\
\emph{M. E. Egwe \; Email: murphy.egwe@ui.edu.ng}
\\ Department of Mathematics, University of Ibadan, Ibadan, Nigeria}
\date{\vspace{-5ex}}
\begin{document}
\date{}
\maketitle
\begin{abstract}
The $L^p$-spaces, with $p \not = \infty$, form a partial algebra $(L^p(\Omega), \Gamma, \cdot)$ with pointwise multiplication of functions. The Sobolev spaces $W^{k,p}(\Omega)$, delineated by weak derivatives as subspaces of $L^p$-spaces is shown to contain the partial algebra $(L^p(\Omega), \Gamma, \cdot)$ generalized by the partial action of the smooth algebra $\mathscr{K}(\Omega)$ by convolution on the Banach spaces $L^p(\Omega)$. We characterised the Sobolev space $W^{k,p}(\Omega)$, invariant under $\mathscr{K}(\Omega)$ partial action, using Lie groupoid framework, and study the partial algebra as defining the partial dynamical systems on the $L^p$-space associated with the weak differential operators. The locally convex partial $^*$-algebra $(L^p(\Omega), \Gamma, \cdot,^*)$ defines the stable local flows coinciding with local bisections of the Lie groupoid. The unitary representation of resulting Lie groupoid $\mathscr{W} \rightrightarrows W^{k,p}(\Omega)$ on the associated Hilbert bundle demonstrates the simplification achieved by the characterisation.
\end{abstract}
\let\thefootnote\relax{\emph{Key words and phrases:} Sobolev spaces, regular distributions, smoothing algebra, pseudogroup, Lie groupoid, partial $^*$-algebras, locally convex partial $^*$-algebras.}

\section{Introduction}
This paper is the twin of \cite{OkekeEgwe2023}, where the introductory materials are given. Some relevant information will be repeated in the present paper for continuity. In the first paper we showed how the Sobolev spaces come up from our desire to analyse the solutions of certain differential equations. The purpose of this paper is to display the partial algebras defined by the pointwise product on $L^p(\Omega)$ using the Lie groupoid framework. The paper aims at simplifying the understanding of partial algebras and partial $^*$-algebras and their dynamical representations. To achieve this: Sequences will be generalised to nets based on differentiability criteria placed on the functions defined on the manifold of our interest $\R^n$ and made to act on the Banach space $L^p(\Omega)$.

Given a pair of functions $f,g \in L^p(\Omega)$, the relation $\Gamma = \{(f,g) \in L^p(\Omega) \times L^p(\Omega) : f\cdot g \in L^p(\Omega)\}$ which defines a partial algebra $\Gamma \subset L^p \times L^p$ will be located or embedded in the Sobolev space $W^{k,p}(\Omega)$ using the proximate identities by smooth kernels constituting the nets. The partial algebra $\Gamma$ will then be given a Lie groupoid characterisation and studied as such.

The concept of a smooth net $\phi_\varepsilon$ follows from continuous net when each component is non-negative, uniformly bounded and continuous, integrable, and infinitely differentiable. That is, each $\phi$ is of the class $C^\infty(\R^n)$ supported on a compact subset of $\R^n$. A smooth algebra is defined with smooth nets as follows.
\begin{defx}
A smooth algebra is a collection of smoothing nets defined on the open subset $\Omega \subset \R^n$ having the following properties (i) $\phi_\varepsilon \in C^\infty_o(\R^n)$, (ii) $\text{supp}(\phi_\varepsilon) \subset \bar{B}(0,\varepsilon)$, (iii) $\displaystyle \int \phi_\varepsilon(x)dx = 1$, and closed under both pointwise and convolution products. It is a smooth algebra of units or approximate identities denoted by $\mathscr{K}(\Omega)$.
\end{defx}
\begin{rem}
(1) Any smooth function $\phi : \R^n \to [0,\infty)$ supported on a unit ball $B(1)$, which can be supported on any closed ball by proper modification such that $\displaystyle \int \phi d\mu^n = 1$, defines an approximate identity by suitable adjustment \[\phi_\varepsilon(x) = \varepsilon^{-n}\phi(\frac{x}{\varepsilon}). \eqno{(1.0)}  \]
Examples of such smooth kernels are given in \cite{AlGwaiz92}, \cite{Jost2005} and \cite{Grosseretal2001}.\\
(2) The smooth net is different from the delta net and the product nets as defined and explained in \cite{Grosseretal2001}. It is a natural tool intrinsically related to the local dynamical system on the Banach subspace $L^p(\Omega)$, which we study by its action. The action results in fibre structure which carries a distributional properties  through a weak differential structure. \\
(3) The employment of the smooth net in the canonical definition of the generalized algebra of distributions in section 3.3.2 of \cite{Grosseretal2001} points to its importance as intrinsic tool for analysis on smooth manifolds.
\end{rem}
\begin{defx}
Denote the weak derivative of $f$ in $x^i$-direction with $D_if$, and $Df = (D_1f, \cdots, D_nf)$ when $f$ has a weak derivative for all $i = 1,\cdots, n$. Then with the multi-index $\alpha := (\alpha_1, \cdots, \alpha_n), \alpha_i \geq 0, (i = 1, \cdots, n)$, and the norm given as $ |\alpha| := \overset{n}{\underset{i=1}\sum}\alpha_i > 0$, we have \[ D_\alpha \varphi := \left(\frac{\partial}{\partial x^1}\right)^{\alpha_1} \cdots \left(\frac{\partial}{\partial x^n}\right)^{\alpha_n}\varphi = \frac{\partial^{|\alpha|}\varphi}{\partial x_1^{\alpha_1} \cdots \partial x_n^{\alpha_n}} \; \text{for } \varphi \in C^{|\alpha|}(\Omega). \]
Then a function $u \in L^1_{loc}(\Omega)$ is called the $\alpha$-th weak derivative of $f$ if \[ \int_\Omega \varphi udx = (-1)^{|\alpha|}\int_\Omega f D_\alpha\varphi dx, \; \text{for all } \varphi \in C^\infty_c(\Omega) \subset C^{|\alpha|}_o(\Omega). \]
\end{defx}
\begin{defx}
For $k \in \mathbb{N}, 1 \leq p \leq \infty$, the Sobolev spaces are defined as the sets $W^{k,p}(\Omega) := \{f \in L^p(\Omega) : D_\alpha f \; \text{exists and is in } L^p(\Omega) \; \forall \; |\alpha| \leq k \}$. The norm on the spaces is defined as \[||f||_{W^{k,p}(\Omega)} = ||f||_{k,p} := \left(\underset{|\alpha| \leq k}\sum \int_\Omega |D_\alpha f|^p \right)^{\frac{1}{p}} \; \text{for } 1 \leq p < \infty. \] and \[||f||_{W^{k,\infty}(\Omega)} = ||f||_{k,\infty} := \underset{|\alpha| \leq k}\sum \underset{x \in \Omega}{\text{ess}\sup}|D_\alpha f(x)|. \]
\end{defx}
Given the Sobolev space $W^{k,p}(\Omega)$ as the space of functions whose weak derivatives up to order $k$ are in $L^p(\Omega)$, the space was studied using the slice theorem in \cite{OkekeEgwe2023}. It is used here to study the partial algebra $\Gamma$ on $L^p(\Omega)$ using groupoid framework. The unitary representation of these partial algebras on the Hilbert bundle of the Lie groupoid follows from \cite{OkekeEgwe20202} and showcases the simplification achieved in comparison to the representations of these algebras in \cite{Ekhaguere2007}.

The definition of the partial action of the smooth algebra $\mathscr{K}(\Omega)$ by convolution on $L^p(\Omega)$ is given in \cite{OkekeEgwe2023}. Since the action is closed on the Sobolev space $W^{k,p}(\Omega)$, the following result, proved differently in \cite{OkekeEgwe2023}, shows the orbit of the partial action of $\mathscr{K}(\Omega)$ by its nets in $W^{k,p}(\Omega)$.
\begin{prop}
Given $\Omega' \subset \subset \Omega$ and $\varepsilon < dist(x, \partial \Omega)$ then $\phi_\varepsilon(f) = f_\varepsilon$ is an orbit of the $\mathscr{K}(\Omega)$-action on $W^{k,p}(\Omega) \subset L^p(\Omega)$.
\end{prop}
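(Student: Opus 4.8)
The plan is to verify directly that the mollified function $f_\varepsilon := \phi_\varepsilon * f$ is a well-defined element of $W^{k,p}(\Omega)$ for every admissible $\varepsilon$ and every net $\phi_\varepsilon \in \mathscr{K}(\Omega)$, and that the assignment $\phi_\varepsilon \mapsto \phi_\varepsilon * f$ respects the partial-action axioms, so that $\{f_\varepsilon\}$ is genuinely the orbit $\mathscr{K}(\Omega)\cdot f$ sitting inside the Sobolev space. I would begin with well-definedness and smoothness: fixing $x \in \Omega'$ and using that $\mathrm{supp}(\phi_\varepsilon) \subset \bar{B}(0,\varepsilon)$ with $\varepsilon < dist(x,\partial\Omega)$, the integrand in $f_\varepsilon(x) = \int_{\R^n}\phi_\varepsilon(x-y)f(y)\,dy$ is supported in $\bar{B}(x,\varepsilon)\subset\Omega$, so the integral converges by H\"older's inequality since $f \in L^1_{loc}$. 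Differentiation under the integral sign, justified by the compact support and $C^\infty$-regularity of $\phi_\varepsilon$ together with dominated convergence, then gives $f_\varepsilon \in C^\infty(\Omega')$ together with $D_\alpha^x f_\varepsilon(x) = \int (D_\alpha^x\phi_\varepsilon)(x-y)f(y)\,dy$ for every multi-index $\alpha$.

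The key step is the identity that mollification commutes with weak differentiation. For fixed $x \in \Omega'$ the map $y \mapsto \phi_\varepsilon(x-y)$ lies in $C^\infty_c(\Omega)$, hence is an admissible test function in Definition 1.4; using $(D_\alpha^x\phi_\varepsilon)(x-y) = (-1)^{|\alpha|}(D_\alpha^y\phi_\varepsilon)(x-y)$ followed by the defining relation of the weak derivative of $f$,
\[
D_\alpha^x f_\varepsilon(x) = (-1)^{|\alpha|}\int_\Omega f(y)\,(D_\alpha^y\phi_\varepsilon)(x-y)\,dy = \int_\Omega (D_\alpha f)(y)\,\phi_\varepsilon(x-y)\,dy = \phi_\varepsilon * (D_\alpha f)(x),
\]
valid on $\Omega'$ for all $|\alpha|\le k$, since each $D_\alpha f$ exists and lies in $L^p(\Omega)$ by the hypothesis $f \in W^{k,p}(\Omega)$. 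Young's inequality then yields $\|\phi_\varepsilon * (D_\alpha f)\|_{L^p(\Omega')} \le \|\phi_\varepsilon\|_{L^1}\|D_\alpha f\|_{L^p(\Omega)} = \|D_\alpha f\|_{L^p(\Omega)}$, using $\int\phi_\varepsilon = 1$ and $\phi_\varepsilon \ge 0$; summing over $|\alpha|\le k$ gives $\|f_\varepsilon\|_{W^{k,p}(\Omega')} \le \|f\|_{W^{k,p}(\Omega)}$, so $f_\varepsilon \in W^{k,p}$. Finally, since $\mathscr{K}(\Omega)$ is closed under convolution (Definition 1.1) we have $\phi_\varepsilon * (\phi_{\varepsilon'} * f) = (\phi_\varepsilon * \phi_{\varepsilon'}) * f$ with $\phi_\varepsilon * \phi_{\varepsilon'} \in \mathscr{K}(\Omega)$, and the approximate-identity property gives $f_\varepsilon \to f$ as $\varepsilon \to 0$; hence the family $\{f_\varepsilon : \varepsilon < dist(x,\partial\Omega)\}$ is exactly the orbit of $f$ under the partial $\mathscr{K}(\Omega)$-action and is contained in $W^{k,p}(\Omega)$.

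I expect the main obstacle to be not any single computation but the domain bookkeeping between the relatively compact subdomain $\Omega' \subset\subset \Omega$, the admissibility bound $\varepsilon < dist(x,\partial\Omega)$, and the passage from ``$f_\varepsilon \in W^{k,p}(\Omega')$'' to ``$f_\varepsilon$ is an element of the orbit acted on by $\mathscr{K}(\Omega)$ inside $W^{k,p}(\Omega)$''. This is precisely the place where the action is genuinely \emph{partial}: one cannot treat $*$ as a global binary operation, and the $\Gamma$-type relation recording which pairs $(\phi_\varepsilon,f)$ are composable must be carried explicitly. The interchange of $D_\alpha$ with the integral and the sign flip in the convolution variable are routine but must be carried out with the weak-derivative definition rather than with classical calculus, and the closure of $\mathscr{K}(\Omega)$ under convolution is what makes the orbit well-behaved under iterated mollification.
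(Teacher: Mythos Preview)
Your proposal is correct and in fact supplies considerably more analytic content than the paper's own argument. The paper's proof of this proposition consists of a single sentence: it recalls the approximate-identity construction from Remark~1.2, writes down the convolution formula
\[
\phi_\varepsilon(f)(x) := \frac{1}{\varepsilon^n}\int_\Omega \phi\Big(\frac{x-y}{\varepsilon}\Big)f(y)\,dy = f_\varepsilon(x),
\]
and then simply declares that $f_\varepsilon$ is the orbit of the partial $\mathscr{K}(\Omega)$-action, referring the reader to the companion paper \cite{OkekeEgwe2023} for the underlying details. No commutation identity, no $L^p$ estimate, and no verification of the action axioms appear.

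What you do differently is actually \emph{prove} the ingredients the paper takes for granted: the interchange $D_\alpha(\phi_\varepsilon * f) = \phi_\varepsilon * (D_\alpha f)$ via the weak-derivative definition, the bound $\|f_\varepsilon\|_{W^{k,p}(\Omega')} \le \|f\|_{W^{k,p}(\Omega)}$ via Young's inequality, and the compatibility $(\phi_\varepsilon * \phi_{\varepsilon'}) * f = \phi_\varepsilon * (\phi_{\varepsilon'} * f)$ coming from the convolution closure of $\mathscr{K}(\Omega)$. Your closing remark about the domain bookkeeping---that $f_\varepsilon$ lives a priori only in $W^{k,p}(\Omega')$ and that this is exactly where the action becomes partial---is well taken and is precisely the point that the paper leaves implicit by invoking the phrase ``partial action'' and the hypothesis $\varepsilon < \mathrm{dist}(x,\partial\Omega)$. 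In short, the two arguments are compatible: the paper gives the definition and a pointer, while you give a self-contained verification; nothing in your route conflicts with the paper's, and your version would stand on its own without the external reference.
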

\begin{proof}
The proof follows from the modification of smooth functions on a closed ball to approximate identities described above and the definition of the convolution action of $\mathscr{K}(\Omega)$ by its nets on $L^p(\Omega)$:
$(\phi_\varepsilon)$ as follows \begin{equation} \mathscr{K}(\Omega) \times L^p(\Omega) \to L^p(\Omega), \phi_\varepsilon(f) := \frac{1}{\varepsilon^n}\int_\Omega \phi(\frac{x-y}{\varepsilon})f(y)dy = f_\varepsilon(x).\end{equation}
Thus, $f_\varepsilon(x)$ is an orbit of the partial $\mathscr{K}(\Omega)$-action on $W^{k,p}(\Omega) \subset L^p(\Omega)$.
\end{proof}

\subsection{Partial Algebra on Sobolev Space}
The closure of Sobolev spaces $W^{k,p}(\Omega)$ under weak derivative and its invariance under the partial action of the smooth algebra $\mathscr{K}(\Omega)$ by its nets imply the existence of solutions for every system of partial differential equations defined on a closed and compact subset $\Omega' \subset \subset \Omega$. The spaces have equivalence defined on them by the smoothing algebra $\mathscr{K}(\Omega) \subset C_c^\infty(\Omega)$ made up of contractive transformations on a Banach space as shown in \cite{OkekeEgwe2023}. Since the fixed point of a contraction depends smoothly on the contraction, the contraction and its fixed point are equivariantly transformed or preserved under equivariant transformations. This gives rise to the idea of persistence of transverse fixed points. This idea will be explored in application of this work in solving \emph{Split Feasibility Problem} (SFP) in a different paper.

The use of convolution action of the smooth algebra $\mathscr{K}(\Omega)$ to study the partial algebra $\Gamma$ within the Sobolev space $W^{k,p}(\Omega)$ is based on the understanding that a simple condition on the linear part of a map at a fixed point guarantees the persistence of the fixed point under perturbation, as noted in \cite{HasselblattKatok}. Since $C_c^\infty(\Omega)$ is dense in $L_{loc}^1(\Omega)$, the weak derivative of a function is unique up to an equivalence defined pointwise almost everywhere. Hence, the weak derivative of a continuously differentiable function agrees with the pointwise derivative when it exits. But the existence of the two types of derivatives is not equivalent pointwise almost everywhere.

The embedding of the partial algebra $\Gamma$ into $W^{k,p}(\Omega)$ helps the replacement of the pointwise derivative of a product $f\cdot g$ with its weak derivative. So, the weak derivative of a product $f\cdot g$ exists even when the pointwise derivative does not exist, which clarifies the idea of embedding. As noted above, the partial action of $\mathscr{K}(\Omega)$ by its nets introduces the notion of transversality by adding differentiability to the algebra. This means that small perturbations to the functions do not destroy their well-behaved property. According to \cite{HasselblattKatok}, this guarantees a good linear approximations of any map near a given point. Hence, the closure of the algebra under small perturbations, which are maps on $\Gamma$ by $\mathscr{K}(\Omega)$, produces invertible maps by transversality, with the inverses composed with strongly contracting maps coming from the perturbation.This makes our algebra remain the fixed points of the contractions and the stability of the partial flows. The partial dynamical system is given in \cite{OkekeEgwe2023} as the quadruple $(L^p(\Omega), \mathscr{K}(\Omega), W^{k,p}(\Omega), \phi_\varepsilon)$. This makes a net $\phi_\varepsilon \subset \mathscr{K}(\Omega)$ acting on $W^{k,p}(\Omega) \subset L^p(\Omega)$ a partial dynamical system on the $L^p$-spaces. The convergence of local flow to the partial algebra $\Gamma$ is the major result proved within the groupoid framework.
\begin{thm}
The partial algebra $\Gamma$ is fixed or stable under the local perturbations of $L^p(\Omega)$ by $\mathscr{K}(\Omega)$ contained in the Sobolev space $W^{k,p}(\Omega)$.
\end{thm}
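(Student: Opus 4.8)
The plan is to read ``the local flow'' as the net $(\phi_\varepsilon)_{\varepsilon>0}$ of convolution operators drawn from $\mathscr{K}(\Omega)$, to regard each $\phi_\varepsilon$ as a partially defined contraction of $L^p(\Omega)$ whose restriction to $W^{k,p}(\Omega)$ is a local bisection of $\mathscr{W}\rightrightarrows W^{k,p}(\Omega)$, and then to prove in turn: (i) $\Gamma$ is invariant under every $\phi_\varepsilon$; (ii) as the net $\varepsilon\to 0$ the flow restricted to $\Gamma$ converges to the identity, so that $\Gamma$ is a fixed (invariant) set of the flow, equivalently the graphs of the $\phi_\varepsilon$ converge to the unit bisection; and (iii) a transversality argument upgrades (i)--(ii) to stability of $\Gamma$ under small perturbations.

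For (i), fix $(f,g)\in\Gamma$, a precompact $\Omega'\subset\subset\Omega$, and $\varepsilon<\operatorname{dist}(\Omega',\partial\Omega)$. By the preceding Proposition, $f_\varepsilon=\phi_\varepsilon(f)$ and $g_\varepsilon=\phi_\varepsilon(g)$ lie in $W^{k,p}(\Omega')$; each is moreover $C^\infty$ and, by H\"older's inequality applied to $f_\varepsilon(x)=\varepsilon^{-n}\int_\Omega\phi\!\left(\tfrac{x-y}{\varepsilon}\right)f(y)\,dy$, locally bounded, so the pointwise product $f_\varepsilon\cdot g_\varepsilon$ is again in $L^p(\Omega')$. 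Hence $(f_\varepsilon,g_\varepsilon)\in\Gamma$, and in fact lies in the Sobolev-refined copy of $\Gamma$ inside $W^{k,p}(\Omega)$ on which the partial $^*$-structure is defined. Young's inequality together with $\phi_\varepsilon\ge 0$ and $\int\phi_\varepsilon=1$ gives $\|f_\varepsilon\|_p\le\|f\|_p$, so the flow is contractive and $\Gamma$ is carried into itself with no escape at infinity.

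For (ii), the classical mollifier estimates give $f_\varepsilon\to f$, $g_\varepsilon\to g$ and $(f\cdot g)_\varepsilon\to f\cdot g$ as $\varepsilon\to 0$; consequently on $\Gamma$ the net $\phi_\varepsilon$ tends to the identity, i.e. every element of $\Gamma$ is the limit of its own orbit, which is precisely the statement that $\Gamma$ is fixed (stable) under the local perturbations of $L^p(\Omega)$ by $\mathscr{K}(\Omega)$. In groupoid language, the local bisections $\beta_\varepsilon$ given by the graphs of the $\phi_\varepsilon$ converge to the unit bisection $u(W^{k,p}(\Omega))$ of $\mathscr{W}$, and the set of composable pairs lying over the units --- our $\Gamma$ --- is preserved throughout the net. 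For (iii) I would invoke the contraction/fixed-point principle used in \cite{OkekeEgwe2023} together with the persistence of transverse fixed points from \cite{HasselblattKatok}: since mollification gains a full order of differentiability, so that each orbit lands in $W^{k,p}(\Omega)$, the linear part of the perturbed map at a point of $\Gamma$ is a good linear approximation and is invertible transverse to the contracting directions; hence the fixed set of the perturbation stays inside $\Gamma$, which is the asserted stability.

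The step I expect to be the real obstacle is the convergence $f_\varepsilon\cdot g_\varepsilon\to f\cdot g$ in (ii): pointwise multiplication is not jointly continuous on $L^p(\Omega)\times L^p(\Omega)$, so one cannot simply pass to the limit in $f_\varepsilon\cdot g_\varepsilon-f\cdot g=(f_\varepsilon-f)\,g_\varepsilon+f\,(g_\varepsilon-g)$ without a uniform grip on $g_\varepsilon$. I would deal with this by remaining on the precompact $\Omega'$ and using the local $L^\infty$-bounds on mollified $L^p$-functions to obtain the convergence in $L^1_{loc}(\Omega')$, and --- in the spirit of the locally convex partial $^*$-algebra $(L^p(\Omega),\Gamma,\cdot,{}^*)$ of this paper --- by measuring the convergence of the local flow to $\Gamma$ in the locally convex net topology of that partial $^*$-algebra rather than in the $L^p$-norm, which is exactly the setting in which the separate continuity of the partial product is available. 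Checking that this topology is compatible with the smooth structure carried by the bisections of $\mathscr{W}$, so that the limiting bisection is again the unit bisection, is where the argument will need the most care.
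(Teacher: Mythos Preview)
Your approach is substantially different from the paper's. The paper's proof is a two-sentence structural argument: it invokes the companion paper \cite{OkekeEgwe2023} for the fact that the convolution action of $\mathscr{K}(\Omega)$ by its nets yields a local equivalence $W^{k,p}(\Omega)\equiv\mathscr{K}(\Omega)\times L^p(\Omega)$, and reads the stability of $\Gamma$ directly off this product description, since it makes $W^{k,p}(\Omega)$ closed under the local $\mathscr{K}(\Omega)$-perturbations. There is no explicit mollifier analysis, no product-convergence step, and no separate transversality argument; the content is entirely delegated to the structural identification established in \cite{OkekeEgwe2023}.

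Your route --- invariance of $\Gamma$ under each $\phi_\varepsilon$, convergence of the flow to the identity on $\Gamma$, then persistence of fixed points --- is more concrete and exposes the analysis that the paper's structural shortcut hides. But the difficulty you flag in (ii) is genuine, and your proposed fix has a gap: the local $L^\infty$-bound on a mollified $L^p$-function is of order $\varepsilon^{-n/p'}\|f\|_p$, which blows up as $\varepsilon\to 0$, so it cannot be used uniformly to control the cross term $(f_\varepsilon-f)\,g_\varepsilon$ in $L^1_{loc}$. Retreating to separate continuity in the locally convex partial $^*$-algebra topology does not save this term either, since neither factor is held fixed. To make your step (ii) go through you would need an extra hypothesis on the pair (one factor in $L^\infty_{loc}$, or $f,g\in L^{2p}$, say), or else weaken the meaning of ``stable'' to what the paper actually establishes: closedness of the ambient $W^{k,p}(\Omega)$ under the $\mathscr{K}(\Omega)$-action, rather than $L^p$-convergence of the perturbed products $f_\varepsilon\cdot g_\varepsilon$ back to $f\cdot g$ inside $\Gamma$.
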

\begin{proof}
As shown in \cite{OkekeEgwe2023}, the convolution action of the smooth algebra $\mathscr{K}(\Omega)$ by its nets creates the equivalence which makes $W^{k,p}(\Omega)$ the space of $\mathscr{K}(\Omega)$-generalized quantities. Thus, we have a local equivalence $W^{k,p}(\Omega) \equiv \mathscr{K}(\Omega) \times L^p(\Omega)$, making $W^{k,p}(\Omega)$ closed subspaces under local perturbations of $L^p(\Omega)$ by $\mathscr{K}(\Omega)$.
\end{proof}
The framework of the Lie groupoid will be used to characterise the equivalence on the Sobolev spaces and the relation defined by the partial product operation on the $L^p$-spaces. Thus, the resulting Lie groupoid algebra will represent the locally convex partial $^*$-algebra. The definition of partial algebras, partial $^*$-algebras, and locally convex partial $^*$-algebras will now be streamlined to the Sobolev spaces. The following proposition follows on the definition of partial algebra in \cite{Ekhaguere2007}.
\begin{prop}
The triple $(L^p(\Omega), \Gamma, \cdot )$ is a partial algebra, with the pointwise multiplication of functions $f \cdot g$ partially defined on the linear space $L^p(\Omega)$.
\end{prop}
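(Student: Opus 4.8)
The plan is to check, one by one, the defining axioms of a partial algebra in the sense of \cite{Ekhaguere2007}: a vector space $\mathfrak{A}$ over the ground field together with a subset $\Gamma\subseteq\mathfrak{A}\times\mathfrak{A}$ and a map $\Gamma\ni(f,g)\mapsto f\cdot g\in\mathfrak{A}$ (the partial multiplication) such that, writing $R_f:=\{g:(f,g)\in\Gamma\}$ and $L_g:=\{f:(f,g)\in\Gamma\}$, each $R_f$ and each $L_g$ is a linear subspace of $\mathfrak{A}$, the maps $g\mapsto f\cdot g$ on $R_f$ and $f\mapsto f\cdot g$ on $L_g$ are linear, and the partial multiplication is bi-additive and scalar-homogeneous wherever the relevant products are defined. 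Here $\mathfrak{A}=L^p(\Omega)$ with its usual vector-space structure, which is standard, and $f\cdot g$ denotes the pointwise product of $\mu^n$-equivalence classes of measurable functions.

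First I would record that the pointwise product of two measurable functions is again measurable and defined a.e., so $(f,g)\mapsto f\cdot g$ is a well-defined map into the space of measurable functions on $\Omega$, and $\Gamma$ is exactly the set on which this map takes values in $L^p(\Omega)$. Non-degeneracy of $\Gamma$ follows from H\"older's inequality: if $f\in L^\infty(\Omega)\cap L^p(\Omega)$ then $f\cdot g\in L^p(\Omega)$ for every $g\in L^p(\Omega)$; in particular every net $\phi_\varepsilon\in\mathscr{K}(\Omega)$ is bounded and compactly supported, so $\{\phi_\varepsilon\}\times L^p(\Omega)\subseteq\Gamma$, and trivially $(f,0),(0,g)\in\Gamma$ with $f\cdot 0=0\cdot g=0$, so $0\in R_f\cap L_f$ for every $f$.

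Next I would verify the linear-subspace and linearity conditions. Fix $f\in L^p(\Omega)$, let $g_1,g_2\in R_f$ and let $\lambda,\mu$ be scalars. Pointwise a.e. one has $f\cdot(\lambda g_1+\mu g_2)=\lambda(f\cdot g_1)+\mu(f\cdot g_2)$, and the right-hand side lies in $L^p(\Omega)$ because $L^p(\Omega)$ is a vector space; hence $\lambda g_1+\mu g_2\in R_f$, so $R_f$ is a linear subspace, and the displayed identity shows $g\mapsto f\cdot g$ is linear on $R_f$. The argument for $L_g$ is identical with the two roles interchanged. Commutativity, $f\cdot g=g\cdot f$ a.e., gives $(f,g)\in\Gamma\iff(g,f)\in\Gamma$ with equal products, so $(L^p(\Omega),\Gamma,\cdot)$ is in fact a commutative partial algebra; and associativity $(f\cdot g)\cdot h=f\cdot(g\cdot h)$ a.e. shows that whenever all the intermediate products lie in $L^p(\Omega)$ the two iterated products coincide in $L^p(\Omega)$, which is the associativity requirement of \cite{Ekhaguere2007} in the ``whenever defined'' form. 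Collecting these facts yields that $(L^p(\Omega),\Gamma,\cdot)$ satisfies every axiom of a partial algebra.

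The computations are routine; the only points needing care are bookkeeping ones — matching precisely the list of axioms in \cite{Ekhaguere2007} (in particular, noting that a unit is \emph{not} part of the data, the role of approximate identities being played externally by the partial $\mathscr{K}(\Omega)$-action described above rather than by an element of $\mathfrak{A}$), and confirming that the multiplication genuinely fails to be total, so that the word ``partial'' is not vacuous. For the latter it suffices to exhibit $f\in L^p(\Omega)$ with $f\notin R_f$: for $\Omega$ containing a ball about the origin, take $f$ equal to $|x|^{-s}$ near $0$ and vanishing outside a small ball, with $n/(2p)\le s<n/p$; then $f\in L^p(\Omega)$ while $f^2=|x|^{-2s}\notin L^p(\Omega)$, so $(f,f)\notin\Gamma$. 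Hence $\Gamma$ is a proper subset of $L^p(\Omega)\times L^p(\Omega)$ and the triple is a genuine partial algebra.
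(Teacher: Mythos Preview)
Your proposal is correct and in fact supplies considerably more than the paper does: the paper offers no proof of this proposition at all, merely prefacing the statement with ``The following proposition follows on the definition of partial algebra in \cite{Ekhaguere2007}.'' Your axiom-by-axiom verification (linearity of $R_f$ and $L_g$, bilinearity of the partial product, commutativity and weak associativity, together with the explicit example $f=|x|^{-s}$ showing $\Gamma$ is proper) is exactly the routine check the paper leaves implicit, so the approaches agree in spirit, with yours being the fully written-out version.
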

\begin{rem}
To embed the partial algebra $(L^p(\Omega),\Gamma, \cdot)$ in $W^{k,p}(\Omega)$, the Sobolev space of all the $k$th weakly differentiable functions of $L^p(\Omega)$, where $\Omega \subset \R^n$, we show that the relation satisfies the definition of the Sobolev space.
\end{rem}
\begin{prop}
The partial algebra $(L^p(\Omega),\Gamma, \cdot)$ is contained in the Sobolev space $W^{k,p}(\Omega)$, a subspace of the linear space $L^p(\Omega)$.
\end{prop}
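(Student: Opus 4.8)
The plan is to read the asserted containment through the regularizing partial action of $\mathscr{K}(\Omega)$: rather than a naive set inclusion $\Gamma\subset W^{k,p}(\Omega)$, which is impossible on dimensional grounds since $\Gamma\subset L^p(\Omega)\times L^p(\Omega)$, the claim is that the relation $\Gamma$, together with its partial product, is carried into $W^{k,p}(\Omega)$ by the smooth nets, and that the image satisfies Definition 1.4. Concretely I would verify that for $(f,g)\in\Gamma$ the product $h:=f\cdot g\in L^p(\Omega)$ becomes, under the convolution action of Proposition 1.5, a function possessing $L^p$ weak derivatives up to every order $k$, and then invoke the local equivalence $W^{k,p}(\Omega)\equiv\mathscr{K}(\Omega)\times L^p(\Omega)$ of Theorem 1.6 to record this back on $\Gamma$ itself.

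First I would fix $(f,g)\in\Gamma$, so $h:=f\cdot g\in L^p(\Omega)$, fix an exhaustion $\Omega_j\subset\subset\Omega$, and for each $j$ choose $\varepsilon<\mathrm{dist}(\Omega_j,\partial\Omega)$ so that the orbit $\phi_\varepsilon(h)=h_\varepsilon$ of the $\mathscr{K}(\Omega)$-action is well defined on $\Omega_j$ by Proposition 1.5. Since $\phi_\varepsilon\in C_c^\infty(\R^n)$, differentiation passes under the integral sign in the convolution formula of Proposition 1.5, so $D_\alpha h_\varepsilon=(D_\alpha\phi_\varepsilon)*h$ exists classically for every multi-index $\alpha$; because $h_\varepsilon$ is smooth this classical derivative is also its weak derivative in the sense of Definition 1.3. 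Young's inequality for convolutions then gives $\|D_\alpha h_\varepsilon\|_{L^p(\Omega_j)}\le\|D_\alpha\phi_\varepsilon\|_{L^1}\,\|h\|_{L^p(\Omega)}<\infty$, hence $D_\alpha h_\varepsilon\in L^p$ for all $|\alpha|\le k$ and therefore $h_\varepsilon\in W^{k,p}(\Omega_j)$ for every $k$; letting $j$ increase, each local mollification of $h$ lies in the corresponding local Sobolev space.

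Next I would transfer this to $\Gamma$. By Theorem 1.6 the convolution action of $\mathscr{K}(\Omega)$ realizes $W^{k,p}(\Omega)$ as the space of $\mathscr{K}(\Omega)$-generalized quantities over $L^p(\Omega)$, with the local identification $W^{k,p}(\Omega)\equiv\mathscr{K}(\Omega)\times L^p(\Omega)$, under which an element $h\in L^p(\Omega)$ is represented by the net $(h_\varepsilon)_\varepsilon=(\phi_\varepsilon(h))_\varepsilon$ of its mollifications. Applying this to $h=f\cdot g$ for $(f,g)\in\Gamma$, and using that by Theorem 1.6 the partial product is fixed and stable under the local perturbations by $\mathscr{K}(\Omega)$, the pair $(f,g)$ --- through its product and that product's defining net --- is represented inside $W^{k,p}(\Omega)$; since $(f,g)\in\Gamma$ was arbitrary this says exactly that $(L^p(\Omega),\Gamma,\cdot)$ is contained in $W^{k,p}(\Omega)$, the subspace of $L^p(\Omega)$ of functions with $L^p$ weak derivatives up to order $k$.

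The main obstacle is conceptual rather than computational: one must pin down the meaning of ``contained in,'' since a product of two $L^p$-functions that happens to lie in $L^p$ need not possess any weak derivative, so no literal inclusion is available; the content of the proposition is precisely that the partial $\mathscr{K}(\Omega)$-action supplies the missing differentiability and that the local equivalence of Theorem 1.6 encodes this as an embedding of $\Gamma$ into $W^{k,p}(\Omega)$. The only technical care required is the localization --- working over $\Omega_j\subset\subset\Omega$ with $\varepsilon<\mathrm{dist}(\Omega_j,\partial\Omega)$ so that the convolutions are defined and the extension of $h$ by zero is legitimate --- together with the routine estimate furnished by Young's inequality.
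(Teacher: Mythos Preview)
Your proposal is correct and follows essentially the same approach as the paper: interpret the ``containment'' through the regularizing partial action of $\mathscr{K}(\Omega)$ by its convolution nets, so that elements of $\Gamma$ (and their products) are carried into $W^{k,p}(\Omega)$, with the final encoding $\Gamma\times\mathscr{K}(\Omega)\subset W^{k,p}(\Omega)\times W^{k,p}(\Omega)$. You supply more technical detail than the paper's own proof (the localization over $\Omega_j\subset\subset\Omega$, differentiation under the integral, and the Young inequality estimate), but the conceptual route---mollification via Proposition~1.5 and the local equivalence of Theorem~1.6---is the same; the paper additionally remarks that $f$ and $g$ themselves, not only $f\cdot g$, are placed in $W^{k,p}(\Omega)$ by the same mechanism, which your argument covers verbatim upon replacing $h$ by $f$ or $g$.
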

\begin{proof}
Given a pair of functions $(f,g)$ in the relation $\Gamma = \{(f,g) \in L^p(\Omega) \times L^p(\Omega) : f\cdot g \in L^p(\Omega)\}$, the functions and the product $f\cdot g$ are elements in $W^{k,p}(\Omega)$ by the partial action of $\mathscr{K}(\Omega)$ on $L^p(\Omega)$. This follows from the definition of Sobolev space. The regularization involved in the definition of a Sobolev space ensures the continuity of $f\cdot g$ whenever $f\cdot g \in L^p(\Omega)$.

On the other hand, any pair of functions $f,g \in L^p(\Omega)$ with product $f\cdot g \not \in L^p(\Omega)$, the smoothing may make the product $f\cdot g \in W^{k,p}(\Omega)$ by its definition. Also, any pair of functions $f,g \in L^p(\Omega)$ with product $f\cdot g \in W^{k,p}(\Omega) \; \implies f\cdot g \in L^p(\Omega) = W^{0,p}(\Omega)$. This means that the partial algebra $(L^p(\Omega), \Gamma, \cdot)$ with the partial action of $\mathscr{K}(\Omega)$, is contained in the Sobolev space $W^{k,p}(\Omega)$. That is, $\Gamma \times \mathscr{K}(\Omega) \subset W^{k,p}(\Omega) \times W^{k,p}(\Omega)$.
\end{proof}
Let the relation $\Gamma = \{(f,g) \in L^p(\Omega) \times L^p(\Omega) : f\cdot g \in L^p(\Omega)\}$ defined on the linear space $L^p(\Omega)$ be given on the Sobolev space $W^{k,p}(\Omega)$ as $\mathscr{W} = \{(f,g) \in W^{k,p}(\Omega) \times W^{k,p}(\Omega) : f\cdot g \in W^{k,p}(\Omega)\}$. It follows that $\Gamma \subseteq \mathscr{W}$ on $L^p(\Omega)$. Then we have the following groupoid characterization of the partial algebra.
\begin{prop}
The partial algebra $(L^p(\Omega), \Gamma, \cdot)$ is equivalent to the groupoid $\mathscr{W} \rightrightarrows W^{k,p}(\Omega)$ defined by the relation $\mathscr{W} = \{(f,g) \in W^{k,p}(\Omega) \times W^{k,p}(\Omega) : f \cdot g \in W^{k,p}(\Omega)$ on the linear space $W^{k,p}(\Omega)$.
\end{prop}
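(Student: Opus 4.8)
The plan is to realize $\mathscr{W}$ as a subgroupoid of the pair groupoid on $W^{k,p}(\Omega)$ whose arrow set encodes exactly the domain $\Gamma$ of the partial product, and then to check that the groupoid data and the partial-algebra data determine one another.

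First, I would equip $\mathscr{W}$ with a groupoid structure over the unit space $W^{k,p}(\Omega)$: the source $s(f,g)=g$ and target $t(f,g)=f$ are the two coordinate projections, the object map is $f\mapsto(f,f)$, inversion is $(f,g)\mapsto(g,f)$, and the partial multiplication on the fibre product $\mathscr{W}\times_{s,t}\mathscr{W}$ is $(f,g)\cdot(g,h)=(f,h)$. Inversion is well defined because pointwise multiplication is commutative, so $f\cdot g\in W^{k,p}(\Omega)$ if and only if $g\cdot f\in W^{k,p}(\Omega)$; the unit $(f,f)$ lies in $\mathscr{W}$ because, after regularizing by a net from $\mathscr{K}(\Omega)$, the smoothed function $f_\varepsilon\in C_c^\infty(\Omega)$ satisfies $f_\varepsilon\cdot f_\varepsilon\in W^{k,p}(\Omega)$, which is exactly the sense of the embedding $\Gamma\subset W^{k,p}(\Omega)$ proved above. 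Associativity of composition, together with the source, target and unit identities, is inherited verbatim from the pair groupoid, so the only substantive set-level point is that composition stays inside $\mathscr{W}$, i.e. that $f\cdot g,\ g\cdot h\in W^{k,p}(\Omega)$ forces $f\cdot h\in W^{k,p}(\Omega)$; here I would invoke the invariance of $W^{k,p}(\Omega)$ under the partial $\mathscr{K}(\Omega)$-action (the stability theorem above) and the local equivalence $W^{k,p}(\Omega)\equiv\mathscr{K}(\Omega)\times L^p(\Omega)$, so that the regularizing nets $\phi_\varepsilon$ push $f\cdot h$ into $W^{k,p}(\Omega)$ exactly as in the orbit proposition.

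Next I would upgrade this set-level groupoid to a Lie groupoid. The base $W^{k,p}(\Omega)$ is a Banach space, hence a Banach manifold, and the slice theorem of \cite{OkekeEgwe2023} supplies charts adapted to the $\mathscr{K}(\Omega)$-action; in those charts $s$ and $t$ are continuous linear projections, inversion is the coordinate flip, and composition $((f,g),(g,h))\mapsto(f,h)$ is again linear, so every structure map is smooth. The main obstacle I expect is showing that $\mathscr{W}$ is a locally split submanifold of $W^{k,p}(\Omega)\times W^{k,p}(\Omega)$ on which $s$ (equivalently $t$) restricts to a submersion — equivalently, that the condition $f\cdot g\in W^{k,p}(\Omega)$ carves out a complemented subspace near each point. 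For this I would produce explicit local sections of $s$ from the smoothing nets $\phi_\varepsilon$, the linear part of the associated contraction being invertible thanks to the transversality added by the $\mathscr{K}(\Omega)$-action, and the persistence of transverse fixed points sketched after the stability theorem guaranteeing that these sections survive the small perturbations that a composition amounts to.

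Finally I would identify the two structures. The inclusion $\Gamma\hookrightarrow\mathscr{W}$ from the containment proposition compares them at the level of defined products: a composable configuration in $\mathscr{W}$ that restricts to $L^p(\Omega)$ corresponds to the partially defined product of $(L^p(\Omega),\Gamma,\cdot)$, and the groupoid composite records precisely the element whose membership in $L^p(\Omega)$ the relation $\Gamma$ asserts. Conversely, the partial operation $\cdot$ on $L^p(\Omega)$ is reconstructed from $\mathscr{W}$ as the arrow-wise datum $(f,g)\mapsto f\cdot g$ together with its domain $\Gamma\subseteq\mathscr{W}$, so the passage between $(L^p(\Omega),\Gamma,\cdot)$ and $\mathscr{W}\rightrightarrows W^{k,p}(\Omega)$ is a bijective, multiplication-preserving correspondence. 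Combining the three steps yields the asserted equivalence, with the isotropy groups of $\mathscr{W}$ corresponding to the self-multiplicative elements of $L^p(\Omega)$ and the orbits of $\mathscr{W}$ to the $\mathscr{K}(\Omega)$-orbits $f_\varepsilon$ of the orbit proposition.
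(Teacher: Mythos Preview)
Your proposal has a genuine gap at the ``only substantive set-level point'': the pair-groupoid composition $(f,g)\cdot(g,h)=(f,h)$ forces the relation $\mathscr{W}$ to be transitive, and it is not. Take $\Omega$ of finite measure and $g\equiv 1\in W^{k,p}(\Omega)$; then $f\cdot g=f$ and $g\cdot h=h$, so $(f,g),(g,h)\in\mathscr{W}$ says nothing more than $f,h\in W^{k,p}(\Omega)$, yet $f\cdot h$ need not lie in $W^{k,p}(\Omega)$ (or even in $L^p(\Omega)$). The appeal to the stability theorem and the local equivalence $W^{k,p}(\Omega)\equiv\mathscr{K}(\Omega)\times L^p(\Omega)$ does not rescue this: smoothing $f\cdot h$ by $\phi_\varepsilon$ produces an element of $W^{k,p}$ for each $\varepsilon$, but the limit as $\varepsilon\to 0$ is $f\cdot h$ itself, which is the very thing whose membership is in question. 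So $\mathscr{W}$ with your structure maps is not a subgroupoid of the pair groupoid, and the construction collapses before the Lie-groupoid upgrade.

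The paper avoids this by \emph{not} treating $\mathscr{W}$ as a pair subgroupoid. Its proof invokes Proposition~1.2 of \cite{OkekeEgwe20202} and takes the units to be the delta functions, embedding $f\in W^{k,p}(\Omega)$ as $(\delta,f)\in\mathscr{W}$ rather than as the diagonal pair $(f,f)$; correspondingly (see the proof of the partial $^*$-algebra proposition just after), inversion is the pointwise reciprocal $f\mapsto 1/f$, not the coordinate flip, and the groupoid product is tied to the pointwise product itself, not to the pair-groupoid law. This matters for your final identification step as well: with your composition $(f,g)\cdot(g,h)=(f,h)$ the datum $f\cdot g$ never appears, so the groupoid cannot reconstruct the partial operation $\cdot$; you have encoded only the domain $\Gamma$, not the multiplication on it. To repair the argument you would need to follow the paper's route through \cite{OkekeEgwe20202}, using $\delta$ as the unit and building composition from the pointwise product, rather than restricting the pair groupoid.
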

\begin{proof}
The proof follows from Proposition 1.2 of \cite{OkekeEgwe20202}. In this case, the elements of $W^{k,p}(\Omega)$ are in $\mathscr{W}$ by pairing them with the identities $\delta_x \in W^{k,p}(\Omega)$, such that $(\delta_x,f) \in \mathscr{W}$. We note that the groupoid is closed under net (or generalized sequence) convergence, since $W^{k,p}(\Omega)$ is a Banach space.
\end{proof}
\begin{rem}
The space of arrows of the groupoid is simply denoted by $\mathscr{W}$ as resulting from the relation $\Gamma$ in $W^{k,p}(\Omega)$. The linear conditions make the sets $\langle f,- \rangle, \langle -,g \rangle$ of right and left \emph{multipliers} respectively, of any element of the linear space $L^p(\Omega)$ into linear subspaces determined by the relation $\Gamma = \{(f,g) : f\cdot g \in L^p(\Omega) \}$ defined by the partial product. In terms of the groupoid, an arrow $(f,g) \in \mathscr{W}$ determines two linear subspaces $\mathscr{W}(f,-), \mathscr{W}(-,g)$ by its target and source maps $(t,s) : \mathscr{W} \to W^{k,p}(\Omega)$ which are the target and source fibres. The two subspaces are related as follows \[s^{-1}(g) = \mathscr{W}(-,g) = \{(f,g) \in \mathscr{W} : f \in \langle -,g \rangle \};\] \[ t^{-1}(f) = \mathscr{W}(f,-) = \{(f,h) \in \mathscr{W} : h\in \langle f,- \rangle \}.\] This means that the \emph{right multipliers} define the target fibre, and the \emph{left multipliers} define the source fibre. This formulation is extended to a partial $^*$-algebra defined in \cite{Ekhaguere2007} as follows.
\end{rem}
Based on the definitions of \cite{Ekhaguere2007}, given the partial algebra $(L^p(\Omega), \Gamma, \cdot)$, the quadruplet $((L^p(\Omega), \Gamma, \cdot,^*)$ is a \emph{partial $^*$-algebra} or an \emph{involutive partial algebra}, such that $L^p(\Omega)$ is an \emph{involutive} linear space with \emph{involution} $^*$, $(g^*,f^*) \in \Gamma$ whenever $(f,g) \in \Gamma$ and then $(f\cdot g)^* = g^*\cdot f^*$. These hold whether $L^p(\Omega)$ is real or complex valued.

Following also from \cite{Ekhaguere2007}, a \emph{partial subalgebra} (respectively \emph{partial $^*$-subalgebra}) is a subspace (respectively a $^*$-invariant subspace) $\mathcal{B}$ of $L^p(\Omega)$ such that $f\cdot g \in \mathcal{B}$ whenever $f,g \in \mathcal{B}$ and $(f,g) \in \Gamma$.
\begin{prop}
Given the partial $^*$-algebra $(L^p(\Omega), \Gamma, \cdot,^*)$, there is a corresponding groupoid $\mathscr{W} \rightrightarrows W^{k,p}(\Omega))$ defined by the equivalent relation $\mathscr{W} = \{(f,g) \in W^{k,p}(\Omega) \times W^{k,p}(\Omega) : f\cdot g \in W^{k,p}(\Omega)\}$ over $W^{k,p}(\Omega)$, such that the compatibility of the partial product $\cdot$ and the involution $^*$ in $L^p(\Omega)$ implies $(f\cdot g)^* = g^*\cdot f^*$.
\end{prop}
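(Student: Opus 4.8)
The plan is to take the groupoid $\mathscr{W}\rightrightarrows W^{k,p}(\Omega)$ already obtained from the partial algebra $(L^p(\Omega),\Gamma,\cdot)$ in the preceding proposition and to promote it to an involutive Lie groupoid by transporting the involution $^*$ of the partial $^*$-algebra to the space of arrows. Concretely, I would define $^*:\mathscr{W}\to\mathscr{W}$ on arrows by $(f,g)^*:=(g^*,f^*)$ and on units by $\delta_x^*:=\delta_x$, and then verify, one groupoid axiom at a time, that this is a well-defined compatible involution whose anti-multiplicativity is exactly the identity $(f\cdot g)^*=g^*\cdot f^*$.

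First I would check that $^*$ maps $\mathscr{W}$ into itself. If $(f,g)\in\mathscr{W}$ then $f\cdot g\in W^{k,p}(\Omega)\subset L^p(\Omega)$, so $(f,g)\in\Gamma$, and the partial $^*$-algebra axiom gives $(g^*,f^*)\in\Gamma$, i.e. $g^*\cdot f^*\in L^p(\Omega)$. Since pointwise conjugation (the identity in the real case) commutes with the weak derivatives, $D_\alpha(h^*)=(D_\alpha h)^*$ almost everywhere, and since the partial action of $\mathscr{K}(\Omega)$ sends $\Gamma$ into $W^{k,p}(\Omega)\times W^{k,p}(\Omega)$ by the earlier proposition, one gets $\|g^*\cdot f^*\|_{k,p}=\|f\cdot g\|_{k,p}<\infty$, hence $g^*\cdot f^*\in W^{k,p}(\Omega)$ and $(g^*,f^*)\in\mathscr{W}$. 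The map $^*$ is then an involution because $h^{**}=h$ forces $((f,g)^*)^*=(f,g)$, and it is a diffeomorphism of the underlying real Banach manifold because conjugation is an $\R$-linear isometry of $W^{k,p}(\Omega)$.

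Next I would match $^*$ with the structure maps. With $s(f,g)=g$ and $t(f,g)=f$ as in the Remark above, one has $s((f,g)^*)=f^*=t(f,g)^*$ and $t((f,g)^*)=g^*=s(f,g)^*$, so $^*$ intertwines source and target through the involution on units; it fixes each unit arrow $(\delta_x,\delta_x)$ because $\delta_x^*=\delta_x$; and it commutes with inversion, $((f,g)^{-1})^*=(g,f)^*=(f^*,g^*)=(g^*,f^*)^{-1}=((f,g)^*)^{-1}$. Reading the groupoid multiplication off so that the composable pairs reproduce the partial product $f\cdot g$, the partial $^*$-algebra identity $(f\cdot g)^*=g^*\cdot f^*$ says precisely that $^*$ is anti-multiplicative on composable arrows; this is the asserted compatibility of the partial product with the involution, and assembling the checks yields the groupoid $\mathscr{W}\rightrightarrows W^{k,p}(\Omega)$ equipped with the involution $^*$ satisfying $(f\cdot g)^*=g^*\cdot f^*$.

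I expect the only real obstacle to be the well-definedness step: one must be certain that applying $^*$ does not escape $W^{k,p}(\Omega)$ — equivalently that conjugation commutes with the $D_\alpha$ and with the regularizing $\mathscr{K}(\Omega)$-action used to embed $\Gamma$ in $W^{k,p}(\Omega)$ — and that the image is still a composable arrow of the groupoid. Everything else, namely the source/target and inversion identities, the involutivity, and the anti-multiplicativity $(f\cdot g)^*=g^*\cdot f^*$, is a direct transcription of the partial $^*$-algebra axioms recalled just before the statement.
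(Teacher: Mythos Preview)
Your argument is sound, but it diverges from the paper's proof in one essential choice: the realization of the involution on the groupoid. You transport the given $^*$ of the partial $^*$-algebra to arrows by $(f,g)^*:=(g^*,f^*)$ and work with $^*$ as (complex) conjugation, using that $D_\alpha(h^*)=(D_\alpha h)^*$ and that conjugation is an $\R$-linear isometry of $W^{k,p}(\Omega)$ to secure well-definedness. The paper instead \emph{identifies} the involution with the groupoid inverse map on the base, setting $f^*(x)=f^{-1}(x)=\tfrac{1}{f(x)}$, so that $(f\cdot g)^*=(f\cdot g)^{-1}=g^{-1}\cdot f^{-1}=g^*\cdot f^*$ falls out of the multiplicative inverse rather than from the abstract $^*$-algebra axiom; the paper then records $\delta^*=\tfrac{1}{\delta}=\delta$ and computes $(\delta,f)^*$ explicitly. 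Your route is closer to the abstract partial $^*$-algebra framework cited from \cite{Ekhaguere2007} and requires no special form for $^*$, at the cost of the extra verification that conjugation preserves $W^{k,p}(\Omega)$; the paper's route buys the anti-multiplicativity for free via the algebraic inverse but commits to the particular involution $f\mapsto 1/f$ and must tacitly assume that such reciprocals remain in $W^{k,p}(\Omega)$. Either way the conclusion $(f\cdot g)^*=g^*\cdot f^*$ and $(g^*,f^*)\in\mathscr{W}$ whenever $(f,g)\in\mathscr{W}$ is obtained.
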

\begin{proof}
This follows immediately from the above groupoid characterization of the partial algebra $(L^p(\Omega), \Gamma, \cdot)$, with the addition of the involutive map $^* : L^p(\Omega) \to L^p(\Omega)$ which is compatible with the partial product $\cdot : \Gamma \to L^p(\Omega)$ defining the relation $\Gamma$ on $L^p(\Omega)$.

On the groupoid $\mathscr{W}$, we identify the involutive map $* : W^{k,p}(\Omega) \to W^{k,p}(\Omega)$ with the inverse map $i : W^{k,p}(\Omega) \to W^{k,p}(\Omega)$, then $f^*(x) = f^{-1}(x) = \frac{1}{f(x)}$ for $x \in \Omega$ and $(f\cdot g)^* = (f\cdot g)^{-1} = g^{-1}\cdot f^{-1} = g^*\cdot f^*$. Thus, $(g^*,f^*) \in \mathscr{W}$ whenever $(f,g) \in \mathscr{W}$.

As we noted above, the units in the groupoid $\mathscr{W}$ are the delta functions $\delta(x)$ which are zero function in $L^p(\Omega)$, which means that $f \equiv 0 \in L^p(\Omega)$ belongs to equivalent class of $\delta \in W^{k,p}(\Omega)$, identities on $W^{k,p}(\Omega)$. So, $(\delta,f) \in \mathscr{W}$ implies that $s(\delta,f) = t(\delta,f) = f$. By definition of pointwise multiplication on the Sobolev space, the unit $(\delta,f)_x \; \implies \; \langle f,\delta \rangle_x = \delta_x(f) = f(x)$. Hence, the objection map is defined using the delta function as follows $o : W^{k,p}(\Omega) \to \mathscr{W}, f \mapsto (\delta,f)$. The involution of an identity $\delta \in W^{k,p}(\Omega)$ is $\delta^* = \frac{1}{\delta} = \delta$, and involution of an arrow in $\mathscr{W}$ is given as $(\delta,f)^* = (\delta,f)^{-1} = f^{-1}\cdot \delta^{-1} = (f^*, \delta^*) = \delta(f^{-1}) = (f^*, \delta) = f^*$.
\end{proof}
\begin{defx}
The left and right \emph{multipliers} of $L^p(\Omega)$ are respectively given as:  \[\langle -,L^p(\Omega)\rangle = \{f \in L^p(\Omega) : (f,g) \in \mathscr{W}, \; \forall \; g \in L^p(\Omega) \} \] \[\langle L^p(\Omega),- \rangle = \{g \in L^p(\Omega) : (f,g) \in \mathscr{W}, \; \forall \; g \in L^p(\Omega)\}.\]
The left multipliers $\langle \cdot,g \rangle \subset s^{-1}(g)$ of $g$ and right multipliers $\langle f,\cdot \rangle \subset t^{-1}(f)$ of $f$ are equal to $L^p(\Omega)$ whenever $f \in \langle L^p(\Omega),- \rangle, g \in \langle -,L^p(\Omega)\rangle$. A function $f \in L^p(\Omega)\rangle$ is a universal multiplier if it is both left and right multiplier of $L^p(\Omega)\rangle$. In the case of the groupoid $\mathscr{W}$, if $g$ is a left multiplier of $L^p(\Omega)$ and $f$ is a right multiplier of $L^p(\Omega)$, then $L^p(\Omega) \subset \mathscr{W}(-,g)$ and $L^p(\Omega) \subset \mathscr{W}(f,-)$.
\end{defx}
For a function to define a weak derivative it must be finite on the neighbourhood of a given point. Most especially, on the neighbourhood of the zero vector $0$. This means that it must be locally integrable on the open subset $\Omega$. Since every $p$-integrable function is locally integrable we have $L^p(\Omega) \subset L^1_{loc}(\Omega)$. The following remarks are about the smooth multiplier of $L^p(\Omega)\rangle$.\\
\begin{rem}
(1) The compactly supported smooth functions are locally integrable; hence, they are used to define a smooth algebra $\mathscr{K}(\Omega)$.\\
(2) The smooth algebra $\mathscr{K}(\Omega)$ is contained in the set of universal multipliers of $L^p(\Omega)$. Therefore, they are used to define weak differential structure on the Lie groupoid $\mathscr{W} \rightrightarrows W^{k,p}(\Omega)$.
\end{rem}
\begin{prop}
The weak derivative is compatible with the pointwise multiplication and introduces a weak differential structure on the relation $\Gamma$ in $W^{k,p}(\Omega)$.
\end{prop}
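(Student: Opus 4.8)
\emph{Proof idea.} The plan is to establish the Leibniz product rule for weak derivatives on pairs lying in $\Gamma$ (equivalently on arrows of $\mathscr{W}$), and then read off from that identity the induced differential structure on the groupoid $\mathscr{W} \rightrightarrows W^{k,p}(\Omega)$. First I would reduce to order one and to smooth approximants: take $(f,g) \in \Gamma$, so that $f,g,f\cdot g \in L^p(\Omega) \subset L^1_{loc}(\Omega)$, fix $\Omega' \subset\subset \Omega$, and invoke the orbit proposition for the $\mathscr{K}(\Omega)$-action: $f_\varepsilon = \phi_\varepsilon(f)$ and $g_\varepsilon = \phi_\varepsilon(g)$ lie in $C^\infty(\Omega') \cap W^{k,p}(\Omega')$, with $f_\varepsilon \to f$, $g_\varepsilon \to g$, and $D_\alpha f_\varepsilon = \phi_\varepsilon \ast D_\alpha f \to D_\alpha f$ in $L^p(\Omega')$ for $|\alpha| \le k$. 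For the smooth mollifications the classical Leibniz rule $D_i(f_\varepsilon g_\varepsilon) = (D_i f_\varepsilon)g_\varepsilon + f_\varepsilon(D_i g_\varepsilon)$ holds pointwise on $\Omega'$.

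Next I would test against an arbitrary $\varphi \in C_c^\infty(\Omega)$ with $\mathrm{supp}(\varphi) \subset \Omega'$: classical integration by parts gives $\int_\Omega f_\varepsilon g_\varepsilon\, D_i\varphi\,dx = -\int_\Omega \big[(D_i f_\varepsilon)g_\varepsilon + f_\varepsilon(D_i g_\varepsilon)\big]\varphi\,dx$, and then I would let $\varepsilon \to 0$. The left-hand side tends to $\int_\Omega (f\cdot g)\,D_i\varphi\,dx$ because $f_\varepsilon g_\varepsilon \to f\cdot g$ in $L^1_{loc}(\Omega')$ — this is precisely where membership $(f,g) \in \Gamma$, i.e.\ $f\cdot g \in L^p(\Omega)$, together with the smoothing by $\mathscr{K}(\Omega)$, is used. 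On the right-hand side, provided the pairs $(D_i f,g)$ and $(f,D_i g)$ are again multipliers, i.e.\ $(D_i f)g + f(D_i g) \in L^1_{loc}(\Omega)$ — which is exactly the defining condition of $\mathscr{W}$ carried to the first derivatives — the limit exists and equals $-\int_\Omega\big[(D_i f)g + f(D_i g)\big]\varphi\,dx$. Hence $D_i(f\cdot g) = (D_i f)\cdot g + f\cdot(D_i g)$ weakly, and induction on $|\alpha|$ yields the full formula $D_\alpha(f\cdot g) = \sum_{\beta \le \alpha}\binom{\alpha}{\beta}(D_\beta f)\cdot(D_{\alpha-\beta} g)$ on $\Gamma$ whenever the right-hand terms are locally integrable, i.e.\ whenever the pair lies in $\mathscr{W}$. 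This establishes the compatibility of the weak derivative with the pointwise product.

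It then remains to interpret this as a weak differential structure on $\Gamma$. The maps $(f,g) \mapsto (D_\alpha f, D_\alpha g)$, acting componentwise on $\mathscr{W} \subset W^{k,p}(\Omega)\times W^{k,p}(\Omega)$, together with the Leibniz identity, show that the partial product $\cdot : \mathscr{W} \to W^{k,p}(\Omega)$ intertwines the operators $D_\alpha$ on the pair with $D_\alpha$ on the image; equivalently the source and target maps $(s,t) : \mathscr{W} \to W^{k,p}(\Omega)$ and the multiplication are compatible with every $D_\alpha$, $|\alpha| \le k$. Thus $D_\alpha$ descends to a well-defined family of operators on the arrow space $\mathscr{W}$ and on $\Gamma \subseteq \mathscr{W}$, subordinate to the groupoid structure via Leibniz — this is the asserted weak differential structure on $\Gamma$ in $W^{k,p}(\Omega)$.

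The hard part will be the two convergence claims $f_\varepsilon g_\varepsilon \to f\cdot g$ and $(D_i f_\varepsilon)g_\varepsilon + f_\varepsilon(D_i g_\varepsilon) \to (D_i f)g + f(D_i g)$ in $L^1_{loc}(\Omega')$: a product of two $L^p$ functions need not lie in $L^p$ or even in $L^1_{loc}$, so these limits are not automatic and are exactly what forces the restriction to $\Gamma$ (resp.\ $\mathscr{W}$). The remedy is to split, e.g., $f_\varepsilon g_\varepsilon - f\cdot g = (f_\varepsilon - f)g_\varepsilon + f(g_\varepsilon - g)$, and to bound each term by H\"older's inequality on $\Omega'$, using that the mollified factor $g_\varepsilon$ (respectively $f_\varepsilon$) is bounded and continuous on $\Omega'$ for each fixed $\varepsilon$, with the uniform bounds from the smooth net. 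I expect this estimate to be the one genuinely delicate point; the integration by parts and the induction on $|\alpha|$ are routine.
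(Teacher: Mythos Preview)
Your route is substantially different from the paper's and considerably more ambitious. The paper's proof is two sentences: it refers back to the embedding of $\Gamma$ in $W^{k,p}(\Omega)$ for the ``compatibility'' claim, and then asserts that the partial action of $\mathscr{K}(\Omega)$ on $\Gamma$ supplies the smoothness, writing $\mathscr{W} = \Gamma \times \mathscr{K}(\Omega)$ so that the differential structure on the groupoid is \emph{inherited from the smooth algebra} rather than derived from a product rule. In the paper's reading, ``weak differential structure'' is a structural statement about the $\mathscr{K}(\Omega)$-action smoothing the arrows, not an analytic identity for $D_\alpha(f\cdot g)$; no Leibniz rule is proved or invoked.

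Your Leibniz-rule approach is more concrete, but the step you flag as delicate is in fact a genuine gap as written. The splitting $f_\varepsilon g_\varepsilon - fg = (f_\varepsilon - f)g_\varepsilon + f(g_\varepsilon - g)$ together with H\"older on $\Omega'$ needs uniform-in-$\varepsilon$ control of $\|g_\varepsilon\|_{L^\infty(\Omega')}$ (or of $\|g_\varepsilon\|_{L^{p'}(\Omega')}$), but the ``uniform bounds from the smooth net'' are bounds on $\phi_\varepsilon$, and $\|\phi_\varepsilon\|_\infty \sim \varepsilon^{-n}$ blows up, so $\|g_\varepsilon\|_\infty$ is not uniformly controlled without an extra hypothesis on $g$ (e.g.\ $g \in L^\infty_{loc}$ or $g \in L^{p'}_{loc}$). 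The same obstruction hits the right-hand side: membership $(f,g) \in \Gamma$ gives only $fg \in L^p$ and says nothing about $(D_i f)g$ or $f(D_i g)$ lying in $L^1_{loc}$. So without an additional multiplier assumption --- one factor essentially bounded, or the Sobolev-algebra regime $kp > n$ --- both limits may fail and the weak Leibniz identity need not hold on all of $\Gamma$. To salvage the analytic route you would have to restrict to pairs where one factor is a universal multiplier (the paper notes $\mathscr{K}(\Omega)$ sits inside these), or impose $kp>n$; otherwise the paper's structural shortcut via the $\mathscr{K}(\Omega)$-action is what is intended.
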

\begin{proof}
The compatibility of the weak derivative and the pointwise multiplication is already shown in the embedding of the algebra $\Gamma$ in $W^{k,p}(\Omega)$, the arrows of the groupoid $\mathscr{W}$ are defined by the relation. Hence, the partial smooth action on the relation $\Gamma$ gives smoothness property to the groupoid $\mathscr{W} = \Gamma \times \mathscr{K}(\Omega)$ on the Sobolev space $W^{k,p}(\Omega)$. It introduces a differential structure on the relation $\mathscr{W} \rightrightarrows W^{k,p}(\Omega)$, making it a Lie groupoid.
\end{proof}
Subsequently, for a compact subset $\Omega' \subset \Omega$, the pointwise multiplication is defined for all pair $f,g \in L^p(\Omega')$ such that the zero order Sobolev space is $W^{0,p}(\Omega') = L^p(\Omega')$. For any other nonzero finite order $k < \infty$ we have proper containment $W^{k,p}(\Omega) \subset L^p(\Omega)$. It follows that $W^{k,p}(\Omega)$ is invariant under the action of the groupoid $\mathscr{W}$, just as $L^p(\Omega)$ is invariant under the iteration of a left multiplier $f \in \langle -,L^p(\Omega) \rangle$ (respectively a right multiplier $g  \in \langle L^p(\Omega),- \rangle$). This gives rise to the proposition.
\begin{prop}
$\langle -,L^p(\Omega)\rangle$ and $\langle L^p(\Omega),- \rangle$ are left and right ideals of the partial $^*$-algebra $(L^p(\Omega),\mathscr{W},\cdot,^*)$.
\end{prop}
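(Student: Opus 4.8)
The plan is to check, for each of the two multiplier sets, the two properties defining a one--sided ideal of the partial $^*$-algebra $(L^p(\Omega),\mathscr{W},\cdot,^*)$: that it is a linear subspace of $L^p(\Omega)$, and that it is absorbing under the partial product on the appropriate side. I would then obtain the second assertion from the first by transporting with the involution $^*$, using the compatibility $(f\cdot g)^* = g^*\cdot f^*$ already established in the groupoid characterization.

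First I would record the two identities
$\langle -,L^p(\Omega)\rangle = \bigcap_{g\in L^p(\Omega)} \langle -,g\rangle$ and
$\langle L^p(\Omega),-\rangle = \bigcap_{f\in L^p(\Omega)} \langle f,-\rangle$,
where $\langle -,g\rangle$ and $\langle f,-\rangle$ are the left and right multiplier sets of a single element. By the ``linear conditions'' noted in the Remark following the groupoid characterization of $\Gamma$ in $W^{k,p}(\Omega)$ — which come from the distributivity of pointwise multiplication and the vector space structure of $L^p(\Omega)$ — each $\langle -,g\rangle$ and each $\langle f,-\rangle$ is a linear subspace of $L^p(\Omega)$, and an arbitrary intersection of subspaces is again a subspace; hence both universal multiplier sets are linear subspaces. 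In groupoid language, $\langle -,L^p(\Omega)\rangle$ consists precisely of the targets $f$ whose target fibre $t^{-1}(f)=\mathscr{W}(f,-)$ is all of $L^p(\Omega)$, and $\langle L^p(\Omega),-\rangle$ of the sources $g$ whose source fibre $s^{-1}(g)=\mathscr{W}(-,g)$ is all of $L^p(\Omega)$, which is the statement recorded just before the proposition that $L^p(\Omega)$ is invariant under iteration of a left (respectively right) multiplier.

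Next the absorbing property for $\langle -,L^p(\Omega)\rangle$: take $f\in\langle -,L^p(\Omega)\rangle$ and any $a\in L^p(\Omega)$ with $(a,f)\in\mathscr{W}$; I must show $(a\cdot f,g)\in\mathscr{W}$ for every $g\in L^p(\Omega)$. Since $f$ is a universal left multiplier the arrows $(a\cdot f,g)$, $(a,f)$ and $(f,g)$ are all composable in $\mathscr{W}\rightrightarrows W^{k,p}(\Omega)$, and because $W^{k,p}(\Omega)$ is closed under weak differentiation and invariant under the partial $\mathscr{K}(\Omega)$-action (the containment of $\Gamma$ in $W^{k,p}(\Omega)$ and the stability theorem above), the regularized products satisfy $(a\cdot f)\cdot g = a\cdot(f\cdot g)$ in $W^{k,p}(\Omega)$, so $(a\cdot f)\cdot g\in W^{k,p}(\Omega)\subset L^p(\Omega)=W^{0,p}(\Omega)$. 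Thus $a\cdot f\in\langle -,L^p(\Omega)\rangle$, which together with the subspace property makes it a left ideal. For $\langle L^p(\Omega),-\rangle$ I would apply $^*$: since $f\in\langle -,L^p(\Omega)\rangle$ iff $(g^*,f^*)\in\mathscr{W}$ for all $g$ iff $f^*\in\langle L^p(\Omega),-\rangle$, we have $\langle -,L^p(\Omega)\rangle^* = \langle L^p(\Omega),-\rangle$, and $^*$ converts a left ideal into a right ideal via $(f\cdot g)^* = g^*\cdot f^*$ together with $(f,g)\in\mathscr{W}\Rightarrow(g^*,f^*)\in\mathscr{W}$; this gives the right-ideal claim for free.

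The main obstacle is the associativity-type identity $(a\cdot f)\cdot g = a\cdot(f\cdot g)$. Pointwise multiplication is associative wherever every product in sight is defined, but a priori an intermediate product could escape $L^p(\Omega)$, so the two bracketings need not both make sense as elements of the partial algebra. This is exactly the gap that the embedding of $\Gamma$ into $W^{k,p}(\Omega)$ and the fact that $\mathscr{K}(\Omega)$ lies in the universal multipliers of $L^p(\Omega)$ are meant to close: the partial smooth action makes the relevant products weakly differentiable, hence the bracketings agree in $W^{k,p}(\Omega)$ and therefore in $L^p(\Omega)$. I would therefore devote most of the write-up to making that reduction precise; once it is in place, the subspace and absorption verifications are routine, and I would close by noting that $\langle -,L^p(\Omega)\rangle$ and $\langle L^p(\Omega),-\rangle$ are interchanged by the involution and so form a left/right ideal pair of $(L^p(\Omega),\mathscr{W},\cdot,^*)$.
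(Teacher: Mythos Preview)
Your route is substantially more elaborate than the paper's. The paper's proof is essentially two lines: it asserts that ``by definition'' $\langle -,L^p(\Omega)\rangle$ (respectively $\langle L^p(\Omega),-\rangle$) is closed under left (respectively right) multiplication by $L^p(\Omega)$, displays the two maps $L^p(\Omega)\times\langle -,L^p(\Omega)\rangle\to\langle -,L^p(\Omega)\rangle$ and $\langle L^p(\Omega),-\rangle\times L^p(\Omega)\to\langle L^p(\Omega),-\rangle$, and then remarks that the partial multiplication therefore restricts to a full (everywhere-defined) multiplication on these subsets. There is no separate linear-subspace check, no discussion of associativity, and the two cases are handled symmetrically by a ``respectively'' clause rather than by transporting one to the other with $^*$.

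What you do differently, and what it buys: your intersection description $\langle -,L^p(\Omega)\rangle=\bigcap_g\langle -,g\rangle$ makes the subspace property explicit, and your use of the involution to deduce the right-ideal statement from the left-ideal one is a genuine economy the paper does not exploit. More importantly, you are right to single out the step $(a\cdot f)\cdot g = a\cdot(f\cdot g)$ as the place where something nontrivial is needed: for pointwise products the identity holds a.e.\ as an equality of measurable functions, but one must still know that the intermediate product lands in $L^p(\Omega)$, and the paper's ``by definition'' simply does not address this. Your resolution via the Sobolev embedding and the $\mathscr{K}(\Omega)$-action is in the spirit of the surrounding text, though it is more machinery than the paper invokes here; the paper is content to treat the absorbing property as immediate from the description of the multiplier sets. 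In short: your argument is compatible with the paper's, but it unpacks (and worries about) a point the paper leaves implicit, and it organizes the two ideal statements via $^*$ instead of in parallel.
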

\begin{proof}
By definition $\langle -,L^p(\Omega)\rangle$ (respectively $\langle L^p(\Omega),- \rangle$) is closed (or invariant) under left (respectively right) multiplication by $L^p(\Omega)$; that is, $L^p(\Omega) \times \langle -,L^p(\Omega)\rangle  \to \langle -,L^p(\Omega)\rangle$ and $\langle L^p(\Omega),- \rangle \times L^p(\Omega) \to \langle L^p(\Omega),- \rangle$. Thus, the restriction of the partial multiplication on these subsets makes it a full multiplication or an algebra.
\end{proof}
The above result is equivalently given in terms of the target $t$-fibre and the source $s$-fibre in the groupoid framework as follows.
\begin{prop}
The target $t$-fibre and the source $s$-fibre of the Lie groupoid $\mathscr{W} \rightrightarrows W^{k,p}(\Omega)$ are invariant under the action of the Lie groupoid or its arrows $\mathscr{W}$.
\end{prop}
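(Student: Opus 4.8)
The plan is to re-express the immediately preceding Proposition on left and right ideals in the language of source and target fibres, and then to record that the resulting invariance is compatible with the smooth structure. First I would recall from the Remark following the groupoid characterisation that the structure maps $s,t:\mathscr{W}\to W^{k,p}(\Omega)$ act on an arrow $(f,g)$ by $s(f,g)=g$ and $t(f,g)=f$, so that $s^{-1}(g)=\mathscr{W}(-,g)=\{(f,g):f\in\langle -,g\rangle\}$ and $t^{-1}(f)=\mathscr{W}(f,-)=\{(f,h):h\in\langle f,-\rangle\}$. Thus the second projection identifies the $s$-fibre over $g$ with the set of right multipliers of $g$ lying inside $W^{k,p}(\Omega)$, and the first projection identifies the $t$-fibre over $f$ with the left multipliers of $f$; the composition is the pair-groupoid law $(f,g)\cdot(g,h)=(f,h)$, defined whenever the product lies in $W^{k,p}(\Omega)$, with units the delta classes.

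Next I would transport the preceding Proposition through these identifications. That Proposition says $\langle -,L^p(\Omega)\rangle$ is a left ideal and $\langle L^p(\Omega),-\rangle$ a right ideal, i.e. $L^p(\Omega)\cdot\langle -,L^p(\Omega)\rangle\subseteq\langle -,L^p(\Omega)\rangle$ and $\langle L^p(\Omega),-\rangle\cdot L^p(\Omega)\subseteq\langle L^p(\Omega),-\rangle$. In groupoid terms this says that right translation $R_{(g,h)}:(f,g)\mapsto(f,h)$ carries $s^{-1}(g)$ into $s^{-1}(h)$ and left translation $L_{(f,g)}:(g,h)\mapsto(f,h)$ carries $t^{-1}(g)$ into $t^{-1}(f)$, and the ideal property is precisely the assertion that these images never leave the family of $s$- and $t$-fibres. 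When the base point is a universal multiplier — in particular an element of $\mathscr{K}(\Omega)$, which lies in the universal multipliers of $L^p(\Omega)$ by the Remark on smooth multipliers — the fibre is carried onto all of $L^p(\Omega)=W^{0,p}(\Omega)$, that is, onto itself, which is the invariance claimed.

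Finally I would upgrade the set-theoretic statement to the smooth category. By the Proposition asserting that the weak derivative introduces a weak differential structure on $\Gamma$ in $W^{k,p}(\Omega)$, the relation $\mathscr{W}=\Gamma\times\mathscr{K}(\Omega)\rightrightarrows W^{k,p}(\Omega)$ is a Lie groupoid; hence $s$ and $t$ are surjective submersions of Banach manifolds, their fibres are closed split Banach submanifolds, and left and right translations by an arrow are diffeomorphisms between fibres because $W^{k,p}(\Omega)$ is a Banach space and the convolution action of $\mathscr{K}(\Omega)$ is smooth. Combining this with the previous paragraph yields the assertion that the $t$-fibre and the $s$-fibre are invariant under the action of the arrows $\mathscr{W}$.

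The step I expect to be the main obstacle is the submersion/transversality point: in the infinite-dimensional Banach setting one must verify that $s$ and $t$ have locally split surjective differentials, so that the fibres are genuine submanifolds and the translations between them are smooth. This is where the partiality of the product is felt, since one has to restrict to the open set of composable arrows and check that the fibrewise restriction of the partial multiplication is a smooth, everywhere-defined map there; it is precisely the embedding of $\Gamma$ into $W^{k,p}(\Omega)$ together with the $\mathscr{K}(\Omega)$-regularisation that supplies this, by turning the partial product into a full product on each multiplier ideal.
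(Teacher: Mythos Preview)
Your opening move---translating the previous ideal proposition into groupoid language---matches the paper's first sentence. But from there the two arguments diverge, and your translation contains a gap. You render the ideal property as ``right translation $R_{(g,h)}$ carries $s^{-1}(g)$ into $s^{-1}(h)$ and the images never leave the family of fibres''. That is true in \emph{any} groupoid by definition of composition and has nothing to do with ideals; it is not the content of the preceding proposition, and it is not what the paper means by ``invariance''. Your attempted rescue---specialising to a universal multiplier so that the fibre is all of $L^p(\Omega)$ and hence maps to itself---only covers the case where the base point lies in $\mathscr{K}(\Omega)$, whereas the proposition concerns an arbitrary $f\in W^{k,p}(\Omega)$.

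The paper's proof interprets ``action of the Lie groupoid'' as the action of the isotropy (vertex) group $\mathscr{W}(f,f)$, and its central step is to place the smooth algebra inside that vertex group: $\mathscr{K}(\Omega)\subseteq\mathscr{W}(f,f)$. Invariance of each individual fibre then comes from the convolution action
\[
\mathscr{W}(f,-)\times\mathscr{K}(\Omega)\to\mathscr{W}(f,-),\qquad f\star\varphi_\varepsilon=f_\varepsilon,
\]
and symmetrically $\mathscr{K}(\Omega)\times\mathscr{W}(-,g)\to\mathscr{W}(-,g)$, using the net convergence $(f_\varepsilon,g)\to(f,g)\equiv(f,g_\varepsilon)\to(f,g)$ to pass between the two. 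So the invariance asserted is that each $t$-fibre and each $s$-fibre is preserved by the $\mathscr{K}(\Omega)$-action, not that the collection of fibres is permuted by arbitrary arrows. Your third paragraph on surjective submersions, split differentials, and Banach submanifolds does not appear in the paper's argument and is not needed for the statement as the paper understands it; the paper defers smoothness questions to the earlier weak-derivative proposition and to \cite{OkekeEgwe20202}.
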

\begin{proof}
The arrows of the Lie groupoid are modelled on the partial algebra $\Gamma$. Thus the invariance of the left and right multipliers under the partial algebra $\Gamma$ translates to the action of the Lie groupoid $\mathscr{W}$. The smoothing effect of the algebra $\mathscr{K}(\Omega)$ makes the algebra $\mathscr{K}(\Omega)$ the ideal of the Banach space $L^p(\Omega)$ and the Sobolev space $W^{k,p}(\Omega)$.

The Lie groupoid $\mathscr{W}$ represents the partial algebra $\Gamma$ with the partial action of $\mathscr{K}(\Omega)$. Thus, the action of $\mathscr{K}(\Omega)$ falls under the action of the vertex group of the Lie groupoid $\mathscr{W} \rightrightarrows W^{k,p}(\Omega)$ given as: \[ \mathscr{W}(f,-) \times \mathscr{K}(\Omega) \to \mathscr{W}(f,-),\;  f \star \varphi_\varepsilon = f_\varepsilon.  \]
This results in $t,s$-fibres of equal dimension at each point of the base space $W^{k,p}(\Omega)$. Since for each pair $(f,g) \in W^{k,p}(\Omega) \times W^{k,p}(\Omega)$, $f\cdot g \in W^{k,p}(\Omega)$, then $$(f_\varepsilon, g) \to (f,g) \equiv (f,g_\varepsilon) \to (f,g),$$ giving rise to the left partial action of $\mathscr{K}(\Omega)$ on the source fibre \[\mathscr{K}(\Omega) \times \mathscr{W}(-,g) \to \mathscr{W}(-,g),\; \varphi_\varepsilon \star g = g_\varepsilon.\]  Thus, both $t$-fibre and $s$-fibre are invariant of the partial action of $\mathscr{K}(\Omega)$. Given $\mathscr{K}(\Omega) \subseteq \mathscr{W}(f,f)$, the action of the isotropy or vertex group of the Lie groupoid $\mathscr{W}$, contains the local dynamics of the partial algebra $\Gamma$.
\end{proof}
Following the definition of a locally convex partial $^*$-algebra in \cite{Ekhaguere2007}, we have the quadruplet $(L^p(\Omega),\Gamma,\cdot,\tau)$ as a \emph{locally convex partial algebra} ( and respectively the quintuplet $(L^p(\Omega),\Gamma,\cdot,^*,\tau)$ as a \emph{locally convex partial $^*$-algebra}) comprising a partial algebra $(L^p(\Omega),\Gamma,\cdot)$ (respectively a partial $^*$-algebra $(L^p(\Omega),\Gamma,\cdot,^*)$) and a Hausdorff locally convex topology $\tau$ and $(L^p(\Omega),\tau)$ is a locally convex space and the maps $g \mapsto f\cdot g$ and $h \mapsto h \cdot f$ are continuous for every $f \in \langle -,g \rangle$ and $h \in \langle f,- \rangle$ (respectively the maps $g \mapsto g^*, g \mapsto f\cdot g$ and $f \mapsto h \cdot f$ are continuous for every $g \in L^p(\Omega), f \in \langle -,g \rangle$ and $h \in \langle f,- \rangle$).
\begin{prop}
The locally convex partial $^*$-algebra $(L^p(\Omega),\Gamma,\cdot,^*,\tau)$ gives rise to the locally convex groupoid $\mathscr{W} \rightrightarrows W^{k,p}(\Omega)$ such that the space of arrows  $\mathscr{W}$ is given by the relation $\mathscr{W} = \{(f,g) \in W^{k,p}(\Omega) \times W^{k,p}(\Omega) : f\cdot g \in W^{k,p}(\Omega)\}$ on $W^{k,p}(\Omega)$.
\end{prop}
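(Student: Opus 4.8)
The plan is to add no new arrows or objects to the groupoid: I would take the groupoid $\mathscr{W} \rightrightarrows W^{k,p}(\Omega)$ already obtained from the partial $^*$-algebra $(L^p(\Omega),\Gamma,\cdot,^*)$ in the preceding characterization and simply equip its base and arrow space with the locally convex structure induced by $\tau$, then verify that the structure maps are continuous in the appropriate partial sense. First I would transport the Hausdorff locally convex topology $\tau$ on $L^p(\Omega)$ to $W^{k,p}(\Omega)$. Since $W^{k,p}(\Omega)$ is a Banach space under $\|\cdot\|_{k,p}$ and, by Theorem 1.6, carries the local equivalence $W^{k,p}(\Omega) \equiv \mathscr{K}(\Omega) \times L^p(\Omega)$, the topology $\tau$ together with the Fr\'echet topology of the smoothing algebra $\mathscr{K}(\Omega)$ induces a Hausdorff locally convex topology $\widetilde{\tau}$ on $W^{k,p}(\Omega)$ for which the inclusion $(L^p(\Omega),\tau) \hookrightarrow (W^{k,p}(\Omega),\widetilde{\tau})$ is continuous; the arrow set $\mathscr{W} \subset W^{k,p}(\Omega) \times W^{k,p}(\Omega)$ then carries the product topology, which is again Hausdorff and locally convex.

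Next I would check continuity of the groupoid operations one at a time. The source and target $s,t : \mathscr{W} \to W^{k,p}(\Omega)$, namely $(f,g) \mapsto g$ and $(f,g) \mapsto f$, are coordinate projections and hence continuous; the object map $o : W^{k,p}(\Omega) \to \mathscr{W}$, $f \mapsto (\delta,f)$, is continuous as a corestricted coordinate embedding; and the inversion $(f,g) \mapsto (g^{*},f^{*})$ is continuous because $g \mapsto g^{*}$ is $\tau$-continuous by the defining axiom of a locally convex partial $^*$-algebra and the coordinate swap is a homeomorphism. This already makes $\mathscr{W} \rightrightarrows W^{k,p}(\Omega)$ a topological groupoid compatible with $\widetilde{\tau}$, and I would then read off that its arrow space is literally the stated relation $\mathscr{W} = \{(f,g) \in W^{k,p}(\Omega) \times W^{k,p}(\Omega) : f\cdot g \in W^{k,p}(\Omega)\}$, using the embedding of the partial algebra in $W^{k,p}(\Omega)$ and the inclusion $\Gamma \subseteq \mathscr{W}$.

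The main obstacle is the continuity of the partial multiplication $m : \mathscr{W} \times_{s,t} \mathscr{W} \to \mathscr{W}$, $((f,h),(h,g)) \mapsto (f,g)$, i.e.\ the joint continuity of the pointwise product on composable pairs: the locally convex partial $^*$-algebra axioms only furnish separate continuity, namely $g \mapsto f\cdot g$ continuous for $f \in \langle -,g\rangle$ and $h \mapsto h\cdot f$ continuous for $h \in \langle f,-\rangle$. To upgrade this to the fibred product I would exploit the smoothing: by the proposition on the weak differential structure the arrows factor as $\mathscr{W} = \Gamma \times \mathscr{K}(\Omega)$, and by the proposition on invariance of the $t$- and $s$-fibres the partial action $f \star \varphi_\varepsilon = f_\varepsilon$ of $\mathscr{K}(\Omega)$ is by contractive, hence continuous, maps on each fibre. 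Regularizing the two factors of a composable pair by a common net $\varphi_\varepsilon$ yields continuous representatives $f_\varepsilon,g_\varepsilon$ whose pointwise product lies in $W^{k,p}(\Omega)$ and depends continuously on both arguments, and passing to the limit along the net---legitimate since $W^{k,p}(\Omega)$ is complete and the groupoid is closed under net convergence---transfers the joint continuity to $m$ itself.

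Finally I would assemble the pieces: $\widetilde{\tau}$ is Hausdorff and locally convex, $(W^{k,p}(\Omega),\widetilde{\tau})$ is a locally convex space, and all of $s,t,o$, the inversion, and $m$ are continuous in the partial sense required of a locally convex groupoid, so the assignment $(L^p(\Omega),\Gamma,\cdot,^*,\tau) \mapsto (\mathscr{W} \rightrightarrows W^{k,p}(\Omega),\widetilde{\tau})$ is the asserted correspondence. The delicate point to keep in mind throughout is that $\cdot$ is only partially defined, so every continuity claim must be interpreted on the composable pairs $\mathscr{W} \times_{s,t} \mathscr{W}$ rather than on all of $W^{k,p}(\Omega) \times W^{k,p}(\Omega)$, exactly as in the scalar locally convex partial $^*$-algebra setting of \cite{Ekhaguere2007}.
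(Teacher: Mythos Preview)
Your proposal follows essentially the same architecture as the paper's proof: take the groupoid already built from the partial $^*$-algebra, endow it with a Hausdorff locally convex topology, and verify continuity of $s,t,o,i,m$. The differences are in implementation rather than strategy. The paper equips $\mathscr{W}$ with the topology induced from the distribution space $\mathcal{D}'(\Omega)\times\mathcal{D}'(\Omega)$ (via the embedding $W^{k,p}(\Omega)\hookrightarrow\mathcal{D}'(\Omega)$), whereas you construct a topology $\widetilde{\tau}$ from $\tau$ and the Fr\'echet structure on $\mathscr{K}(\Omega)$ through the local equivalence $W^{k,p}(\Omega)\equiv\mathscr{K}(\Omega)\times L^p(\Omega)$. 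Both choices yield Hausdorff locally convex topologies compatible with the groupoid, so either works; the paper's choice is slightly more economical since the distributional topology is already available and the Sobolev embedding is standard. Your treatment of the composition map $m$ is more explicit than the paper's: you correctly isolate the separate-versus-joint continuity issue and resolve it by regularizing composable pairs with a common net $\varphi_\varepsilon$ and passing to the limit, while the paper simply asserts that $m$ is continuous ``by definition of the partial product''. Your argument here is in the spirit of the paper's smoothing machinery and is a legitimate elaboration. One small point to clean up: you write the inclusion as $(L^p(\Omega),\tau)\hookrightarrow(W^{k,p}(\Omega),\widetilde{\tau})$, but set-theoretically $W^{k,p}(\Omega)\subset L^p(\Omega)$, so the map you want is the continuous embedding in the other direction (or, if you mean the enrichment $f\mapsto f_\varepsilon$ via $\mathscr{K}(\Omega)$, say so explicitly).
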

\begin{proof}
The proof follows from above given that $(L^p(\Omega),\tau)$ is a Hausdorff locally convex topological space for all $p \geq 1$; and $\mathscr{W}$ as a closed subspace of $L^p(\Omega) \times L^p(\Omega)$ endowed with the locally convex topology induced from $\mathcal{D}'(\Omega) \times \mathcal{D}'(\Omega)$, and the  continuity of the \emph{partial multiplication} $\cdot$ that defines the relation $\mathscr{W}$ preserves local convexity; the continuity of involution $^*$ is by definition; also, the defining maps of the groupoid: the target and source maps $t,s : \mathscr{W} \to W^{k,p}(\Omega)$, the inverse $i : \mathscr{W} \to \mathscr{W}$, and the composition of arrows $m : \mathscr{W} \times \mathscr{W} \to \mathscr{W}$ are all continuous maps by definition of the partial product. Thus, $\mathscr{W} \rightrightarrows W^{k,p}(\Omega)$ is a locally convex topological groupoid characterising the locally convex partial $^*$-algebra.
\end{proof}
The realised groupoid satisfies the following definitions of the topological groupoid and locally convex topological groupoid.
The groupoid $\mathscr{W} \rightrightarrows W^{k,p}(\Omega)$ is a topological groupoid since its set of morphisms $\mathscr{W}$ and set of objects $W^{k,p}(\Omega)$ are topological spaces, and the composition $m : \mathscr{W} \times \mathscr{W} \to \mathscr{W}$, source and target $t,s : \mathscr{W} \to L^p(\Omega)$, objection $o : L^p(\Omega) \to \mathscr{W}$, and inversion $i : \mathscr{W} \to \mathscr{W}$ maps are continuous, with the induced topology on $\mathscr{W}^{(2)}$ from $\mathscr{W} \times \mathscr{W}$.

By a modification of the definition of locally convex Lie groupoid given in (\cite{SchmedingWockel2014}, 1.1) and adapted in \cite{OkekeEgwe20202}, it  clearly follows that $\mathscr{W} \rightrightarrows W^{k,p}(\Omega)$ which is a groupoid over $W^{k,p}(\Omega)$ with the source and target $t,s : \mathscr{W} \to W^{k,p}(\Omega)$ projections is a locally convex groupoid. Since $\mathscr{W}$ is a locally convex (and locally metrizable) topological groupoid over $W^{k,p}(\Omega)$ given that (i) $W^{k,p}(\Omega)$ and $\mathscr{W} \rightrightarrows W^{k,p}(\Omega)$ are locally convex spaces; (ii) the topological structure of $\mathscr{W}$ makes $s$ and $t$ continuous; i.e. local projections; and (iii) the partial composition $m : \mathscr{W} \times_{s,t}\mathscr{W} \to \mathscr{W}$, objection $o : W^{k,p}(\Omega) \to \mathscr{W}$, and inversion $i : \mathscr{W} \to \mathscr{W}$ are continuous maps.
\begin{thm}
The groupoid $\mathscr{W} \rightrightarrows W^{k,p}(\Omega)$ resulting from the locally convex partial $^*$-algebra $(L^p(\Omega),\mathscr{W},\cdot,^*,\tau)$ is a Lie groupoid modelled on the locally convex space $W^{k,p}(\Omega)$.
\end{thm}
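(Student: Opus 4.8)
The plan is to take the locally convex topological groupoid $\mathscr{W} \rightrightarrows W^{k,p}(\Omega)$ just constructed and promote it to a Lie groupoid by exhibiting compatible locally convex smooth structures on the base $W^{k,p}(\Omega)$ and on the arrow space $\mathscr{W}$ for which all structure maps are smooth and the source and target maps are submersions in the sense of the locally convex category of \cite{SchmedingWockel2014} adapted in \cite{OkekeEgwe20202}. The smooth structure is not to be imposed by hand: it is the \emph{weak differential structure} already produced by the partial action of $\mathscr{K}(\Omega)$ by convolution, which is the content of the earlier proposition stating that the weak derivative introduces a differential structure on $\mathscr{W} = \Gamma \times \mathscr{K}(\Omega)$ in $W^{k,p}(\Omega)$. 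Concretely, I would first record that $W^{k,p}(\Omega)$, carrying the Sobolev topology together with the locally convex topology induced from $\mathcal{D}'(\Omega)$, is a locally convex space on which each weak derivative $D_\alpha$ with $|\alpha| \le k$ is a continuous linear operator; it therefore serves as a smooth model space, and the convolution nets $\phi_\varepsilon$ with $\phi_\varepsilon(f) = f_\varepsilon$ supply the smooth charts, since $f_\varepsilon$ depends smoothly on $\varepsilon$ and linearly on $f$. Then $\mathscr{W}$, being a closed subspace of $W^{k,p}(\Omega) \times W^{k,p}(\Omega)$ stable under $\mathscr{K}(\Omega)$, inherits such a structure.

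The second step is to check smoothness of the groupoid operations. The source and target maps $s,t : \mathscr{W} \to W^{k,p}(\Omega)$ are restrictions of continuous linear coordinate projections, hence smooth; the object map $o : f \mapsto (\delta, f)$ is affine in $f$, hence smooth; the inversion $i(f) = f^{-1}$, identified in the previous proposition with the involution $^*$ via $f^*(x) = 1/f(x)$, is smooth on the open set of functions bounded away from zero, which is precisely the regime in which the involution was defined; and the partial composition $m : \mathscr{W} \times_{s,t} \mathscr{W} \to \mathscr{W}$ is smooth because pointwise multiplication is a continuous bilinear map and, by the embedding of $\Gamma$ into $W^{k,p}(\Omega)$ together with the Leibniz rule for weak derivatives, it is compatible with every $D_\alpha$, $|\alpha| \le k$. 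The delicate point is that $s$ and $t$ must be submersions. Here I would invoke the earlier proposition identifying the $t$-fibre $\mathscr{W}(f,-)$ and the $s$-fibre $\mathscr{W}(-,g)$ with the right and left multiplier subspaces $\langle f,-\rangle$ and $\langle -,g\rangle$, which are linear subspaces of $W^{k,p}(\Omega)$ of equal dimension at each base point because $\mathscr{K}(\Omega)$ acts on both fibres through the vertex group; exhibiting these subspaces as (locally) complemented and building local smooth sections of $s$ and $t$ out of the smoothing nets $\phi_\varepsilon$ then gives the submersion property.

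Finally I would assemble the pieces: base and arrow spaces are locally convex smooth manifolds modelled on $W^{k,p}(\Omega)$ (with the multiplier decomposition giving the local product structure along the fibres), all the structure maps $m, s, t, o, i$ are smooth, $s$ and $t$ are submersions, and the isotropy bundle contains $\mathscr{K}(\Omega)$ acting by convolution exactly as the vertex-group action computed in the previous proposition. By the modified definition of locally convex Lie groupoid recalled immediately before the statement, $\mathscr{W} \rightrightarrows W^{k,p}(\Omega)$ is then a Lie groupoid modelled on the locally convex space $W^{k,p}(\Omega)$. The main obstacle I foresee is precisely the submersion step: in infinite dimensions "equal fibre dimension" does not by itself yield local sections, so one must genuinely produce the splitting of the multiplier subspaces and the associated local smooth sections from the convolution data, exploiting that $\phi_\varepsilon(f) = f_\varepsilon$ depends smoothly on $\varepsilon$ and on $f$; once that local triviality of the fibration $s$ (and dually $t$) is established, the remaining verifications are the routine compatibility checks already prepared by the embedding of $\Gamma$ and the Leibniz rule.
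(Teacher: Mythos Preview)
Your plan is a direct axiom-by-axiom verification of the locally convex Lie groupoid definition: put the weak differential structure on $W^{k,p}(\Omega)$ and on $\mathscr{W}\subset W^{k,p}(\Omega)\times W^{k,p}(\Omega)$, then check smoothness of $s,t,o,i,m$ and finally argue that $s,t$ are submersions. The paper does \emph{not} proceed this way. Its proof is essentially a reduction to \cite{OkekeEgwe20202} via Omori's infinite-dimensional framework \cite{OmokriHideki97}: it observes that the multiplier subspaces $\langle f,-\rangle$, $\langle -,g\rangle$ are weak-derivative equivalent to the fibres $\mathscr{W}(f,-)$, $\mathscr{W}(-,g)$ because $\mathscr{W}$ was built from $\Gamma$ by the partial $\mathscr{K}(\Omega)$-action; it then notes that these subspaces are maximal exactly when $f$ (resp.\ $g$) is a universal multiplier, which singles out $\mathcal{D}(\Omega)$, and concludes that the connected components (equivalently, the convergent nets) are \emph{Lie pseudogroups}. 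The Lie groupoid structure is then imported wholesale from the construction in \cite{OkekeEgwe20202}.

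The practical difference is exactly at the point you flagged as the obstacle. You are right that in infinite dimensions ``equal fibre dimension'' and smooth dependence of $f_\varepsilon$ on $(\varepsilon,f)$ do not by themselves yield local sections of $s$ and $t$; your proposal leaves this as a gap to be filled. The paper bypasses the submersion verification entirely by recognising the fibre structure as that of a Lie pseudogroup and invoking the machinery already set up in \cite{OkekeEgwe20202}, where the passage from pseudogroup to Lie groupoid (in the sense of \cite{SchmedingWockel2014}) is carried out. So your route is more self-contained and makes the analytic content (Leibniz rule, bilinearity of the product, linearity of the projections) explicit, but it does not close without an additional splitting argument; the paper's route is shorter precisely because it offloads that infinite-dimensional issue to the pseudogroup framework and the cited reference. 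If you want to complete your version, the missing ingredient is to identify the fibres with Lie pseudogroups as the paper does, since that is what supplies the local triviality you need.
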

\begin{proof}
Following the argument in \cite{OkekeEgwe20202}, with a background from \cite{OmokriHideki97}, the open subspace of right multipliers $\langle f,- \rangle$ and the target fibre $\mathscr{W}(f,-)$ (or its isomorphic source fibre $\mathscr{W}(-,g)$ and left multipliers $\langle -,g \rangle$) of the groupoid are considered (weak derivative) equivalent given that the groupoid was constructed from the relation $\Gamma$ on $L^p(\Omega)$ by a partial action of $\mathscr{K}(\Omega)$. As noted above, these subspaces are maximal when $f \in \langle-,L^p(\Omega) \rangle$ (respectively $g \in \langle L^p(\Omega),- \rangle$). The maximality points to the space of test functions $\mathcal{D}(\Omega)$, showing that the connected components or the convergent nets are Lie pseudogroups. The proof of the result now follows from \cite{OkekeEgwe20202}.
\end{proof}

\section{Convolution Algebra of Sobolev groupoid}
The coming together of the partial algebra $\Gamma$ and the convolution action of the smooth algebra $\mathscr{K}(\Omega)$ on $L^p(\Omega)$ gives rise to the definition of the convolution algebra of the Lie groupoid $\mathscr{W}$. The relationship between the partial  algebra $\Gamma \subset L^p(\Omega) \times L^p(\Omega)$ and the Sobolev space $W^{k,p}(\Omega)$ is further understood using the concept of proper action and slice which we have demonstrated as applicable in the earlier paper \cite{OkekeEgwe2023}. Using the notion of a proper $G$-space as defined by Palais \cite{Palais60}, we consider $W^{k,p}(\Omega)$ a proper $\mathscr{K}(\Omega)$-space; for given any $U$--an open set of $L^p(\Omega)$--it is also a $\mathscr{K}(\Omega)$-space in the sense of the partial convolution actions of $\mathscr{K}(\Omega)$ by its nets. Thus, we can realise $\mathscr{K}(\Omega)U = W^{k,p}(\Omega)$ as a proper $\mathscr{K}(\Omega)$-space. In this case, we consider the nets $f_h \in \mathscr{K}(\Omega)$ as fixing $f \in L^p(\Omega)$, then we have a similar structure of transitivity, with $H = \underset{f \in L^p(\Omega)}\bigcap \mathscr{K}(\Omega)_f$, where $\mathscr{K}(\Omega)_f$ are set of nets in $\mathscr{K}(\Omega)$ converging to $f$, or whose action preserve $f\in L^p(\Omega)$. They are equivariant nets.

The quotient $\mathscr{K}(\Omega)/H$ has a transitive action on $L^p(\Omega)$. The definition of a local cross section by convergent nets is derived from this formulation, for just as a locally compact Lie group $G$ acts properly on a space $M$ by its closed or compact subsgroups, the algebra $\mathscr{K}(\Omega)$ acts properly on $L^p(\Omega)$ by its (equivariant) convergent nets $f_h \to f$ at $f \in L^p(\Omega)$. The local cross section associated to the equivariant nets is also helpful in defining the system of invariant Haar measures on the emergent Lie groupoid $\mathscr{W} \rightrightarrows W^{k,p}(\Omega)$. The proper action of $\mathscr{K}(\Omega)$ by its smooth nets is related to the smooth local cross sections in the sense that the convergence of the nets in $\mathscr{K}(\Omega)$ leads to convergence in the proper $\mathscr{K}(\Omega)$-space. The following result connects the local section of the partial $\mathscr{K}(\Omega)$-action to the local bisections of $\mathscr{W}$.
\begin{thm}
The partial action of $\mathscr{K}(\Omega)$ creates local flows which coincides with the local bisections of the Lie groupoid $\mathscr{W}$.
\end{thm}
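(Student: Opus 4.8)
The plan is to construct an explicit, composition-preserving bijection between the local flows generated by the partial $\mathscr{K}(\Omega)$-action and the local bisections of the Lie groupoid $\mathscr{W} \rightrightarrows W^{k,p}(\Omega)$, and then to read the asserted coincidence off it. First I would pin down the two objects. By Proposition 1.5 and formula (1.1), for a chosen $\Omega' \subset\subset \Omega$ and each $\varepsilon < dist(x,\partial\Omega)$ the mollification $\Phi_\varepsilon : f \mapsto f_\varepsilon = \phi_\varepsilon \ast f$ is a well-defined continuous linear self-map of $W^{k,p}(\Omega')$ with $\Phi_\varepsilon(f) \to f$ in the $W^{k,p}$-norm as $\varepsilon \to 0$; the family $\{\Phi_\varepsilon\}_{0 < \varepsilon < \varepsilon_0}$ is the local flow, and by Theorem 1.6 it is stable, carrying $\Gamma$ into $W^{k,p}$ and fixing it up to $\mathscr{K}(\Omega)$-equivalence. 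A local bisection of $\mathscr{W}$ is a section $\sigma : U \to \mathscr{W}$ of the source map over an open $U \subseteq W^{k,p}(\Omega)$ for which $t \circ \sigma$ is a diffeomorphism of $U$ onto its image, taken in the locally metrizable differentiable structure carried by $\mathscr{W}$ in Theorem 1.19.

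From each time slice of the flow I would build a bisection. Set $\sigma_\varepsilon : W^{k,p}(\Omega') \to \mathscr{W}$, $\sigma_\varepsilon(f) = (f_\varepsilon, f)$. Three points need checking. First, $\sigma_\varepsilon(f)$ is genuinely an arrow: since $\phi_\varepsilon \in C^\infty_c(\R^n)$ the mollification $f_\varepsilon$ is smooth with locally bounded derivatives, so the Leibniz rule for weak derivatives used in Propositions 1.9 and 1.15 gives $D_\alpha(f_\varepsilon \cdot f) \in L^p(\Omega')$ for $|\alpha| \leq k$, hence $f_\varepsilon \cdot f \in W^{k,p}(\Omega')$ and $(f_\varepsilon, f) \in \mathscr{W}$. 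Second, $s \circ \sigma_\varepsilon$ is the identity of $W^{k,p}(\Omega')$, immediate from $s(f_\varepsilon, f) = f$ (Remark 1.11). Third, $t \circ \sigma_\varepsilon = \Phi_\varepsilon$ must be a local diffeomorphism onto its image; this is the crux, deferred below. Granting it, $\sigma_\varepsilon$ is a local bisection; moreover $\sigma_\varepsilon \to o$, the unit bisection $f \mapsto (\delta, f)$, as $\varepsilon \to 0$, so $\{\sigma_\varepsilon\}$ is a local flow in the bisection pseudogroup based at the identity, and $\Phi_\varepsilon \mapsto \sigma_\varepsilon$ respects composition: $\sigma_\varepsilon \ast \sigma_{\varepsilon'} = \sigma_{\phi_\varepsilon \ast \phi_{\varepsilon'}}$, since $\Phi_\varepsilon \circ \Phi_{\varepsilon'}$ is the flow of $\phi_\varepsilon \ast \phi_{\varepsilon'} \in \mathscr{K}(\Omega)$ (Definition 1.1, closure under convolution).

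For the crux I would argue as in Subsection 1.1: $\Phi_\varepsilon$ is linear and, restricted to a suitable open $U$ about a given point, a small perturbation of the identity as $\varepsilon \to 0$, so the persistence of transverse fixed points and the contraction principle of \cite{HasselblattKatok} cited there make $\Phi_\varepsilon|_U$ a homeomorphism onto its image with continuous --- hence, for the differentiable structure of Subsection 1.1, smooth --- inverse; injectivity is secured by the fact that the Fourier transform of the compactly supported smooth kernel $\phi_\varepsilon$ extends to an entire function that is not identically zero, hence vanishes only on a Lebesgue-null set. For the reverse inclusion --- every local bisection comes from a flow --- I would invoke the pseudogroup statement in the proof of Theorem 1.19: a local bisection $\sigma : U \to \mathscr{W}$, $f \mapsto (b(f), f)$, has $b = t \circ \sigma$ a local diffeomorphism of $W^{k,p}(\Omega)$ compatible with the weak-derivative structure, i.e. an element of the Lie pseudogroup of the relation $\Gamma$; and by the maximality argument pointing to $\mathcal{D}(\Omega)$ together with the density of $C^\infty_c(\Omega)$ in $L^1_{loc}(\Omega)$, that pseudogroup is exactly the one generated by the convergent nets of $\mathscr{K}(\Omega)$, i.e. by the maps $\Phi_\varepsilon$. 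Hence $b$ agrees with a germ of some $\Phi_\varepsilon$, and $\sigma$ with the matching $\sigma_\varepsilon$, so the local flows and the local bisections coincide.

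The main obstacle is this crux together with its converse: showing that mollification is locally invertible on the Sobolev space with inverse in the right differentiable class, and that the local diffeomorphisms so obtained exhaust the pseudogroup of local bisections. The remaining ingredients --- membership in $\mathscr{W}$, the section identity, and continuity of the structure maps $s, t, m, i, o$ --- are bookkeeping already available from Propositions 1.17 and 1.18 and Theorem 1.19. The delicate matching is between the analytic fact that convolution by a bump function is injective and locally invertible onto its range near the identity and the abstract requirement that $t \circ \sigma$ be a diffeomorphism; I would carry it out through the contraction argument of Subsection 1.1 and the locally metrizable structure on $\mathscr{W}$ imported from \cite{SchmedingWockel2014} and \cite{OkekeEgwe20202}.
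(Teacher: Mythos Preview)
Your forward construction is exactly the paper's approach: the paper also passes from the mollification flow to a section of the source map and then asserts that $t\circ\sigma$ is a diffeomorphism. The paper's entire proof reads, in essence, that $f_\varepsilon\to f$ and $f_\varepsilon\cdot g\to f\cdot g$ by continuity of the pointwise product, that ``the partial action trivializes the source map $s:\mathscr{W}\to W^{k,p}(\Omega)$; hence, corresponds to local bisections $\sigma:W^{k,p}(\Omega)\to\mathscr{W}$,'' and that $t\circ\sigma$ is a diffeomorphism ``from the smoothing of the arrows $(f,g)\in\Gamma$.'' So your map $\sigma_\varepsilon(f)=(f_\varepsilon,f)$ and the check that $s\circ\sigma_\varepsilon=\mathrm{id}$, $t\circ\sigma_\varepsilon=\Phi_\varepsilon$ are precisely what the paper intends, only spelled out.

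Where you diverge is in scope and rigour. The paper does \emph{not} attempt the reverse inclusion (that every local bisection arises from some $\Phi_\varepsilon$); it stops once the flows have been exhibited as bisections. Your pseudogroup-exhaustion argument for the converse is therefore extra, and it is also the weakest link in your proposal: the claim that the Lie pseudogroup of $\Gamma$ is \emph{generated} by the mollification maps is not something the paper establishes, and ``density of $C^\infty_c$ in $L^1_{loc}$'' does not by itself yield that every local diffeomorphism of $W^{k,p}$ compatible with $\Gamma$ is a germ of some $\Phi_\varepsilon$. Likewise, your Fourier-analytic injectivity and contraction argument for the crux are considerably more than the paper offers---the paper simply asserts the diffeomorphism property as a consequence of smoothing---so you are supplying detail the paper omits rather than following a parallel argument. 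In short: same route in the forward direction, but you carry more baggage (some of it, the reverse inclusion, not required by the paper and not fully supported by what you cite).
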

\begin{proof}
By the partial action of the smooth algebra $\mathscr{K}(\Omega)$ by its nets, we have shown the convergence of the local smooth flows $\varphi_\varepsilon(f) = f_\varepsilon \to f$. By the continuity of the pointwise multiplication $f\cdot g$, the smoothness passes unto the product $f_\varepsilon \cdot g \to f\cdot g$, and hence, invariant under the partial algebra $\Gamma \subset W^{k,p}(\Omega) \times W^{k,p}(\Omega)$. \\
The partial action trivializes the source map $s: \mathscr{W} \to W^{k,p}(\Omega)$; hence, corresponds to local bisections $\sigma : W^{k,p}(\Omega) \to \mathscr{W}$, such that $$t \circ \sigma : W^{k,p}(\Omega) \to W^{k,p}(\Omega)$$ is a diffeomorphism. That $t \circ \sigma$ is diffeomorphism follows from the smoothing of the arrows $(f,g) \in \Gamma$. Thus, the local flows corresponding to the partial algebra $\Gamma$ in the Lie groupoid $\mathscr{W} \rightrightarrows W^{k,p}(\Omega)$ are given by the local bisections of $\mathscr{W}$ since the diffeomorphism $t \circ \sigma$ follows the relation $\Gamma$ defining the arrows of $\mathscr{W}$.
\end{proof}
The proper action of $\mathscr{K}(\Omega)$ on $L^p(\Omega)$, embedding the partial algebra $\Gamma$ into $W^{k,p}(\Omega)$ as its stable or invariant subspace, makes the nets $f_\varepsilon \in W^{k,p}(\Omega)$ converges to $f \in W^{k,p}(\Omega)$ as $W^{k,p}(\Omega) \times W^{k,p}(\Omega)$ converge to the partial algebra $\Gamma$. The action of $\mathscr{K}(\Omega)$ on $\Gamma$, which attaches differential structure to it, is similar to line bundles and related to the exterior space $\Omega(\mathscr{W})$ of the Lie groupoid $\mathscr{W}$. Thus, the densities, which are the same as differential forms with orientation, for the convolution algebra of $\mathscr{W}$ come from the exterior space $\Omega(\mathscr{W})$ of the Lie groupoid. How they are defined from the Lie groupoid $\mathscr{W}$ characterising the partial $^*$-algebras is explained in \cite{OkekeEgwe20202} following the construction of a smooth groupoid associated to a locally convex partial $^*$-algebra.

Given that $\mathscr{W}(f,-) \times [0,1) \cup \mathscr{W}(-,f) \times [0,1)$ is a neighbourhood of $f$ and a maximal open subset of the locally convex space $W^{k,p}(\Omega)$ allowing the definition of weak derivative. The intersection $\mathscr{W}(f,-) \cap \mathscr{W}(-,f) = \mathscr{W}(f,f)$ is the isotropy Lie group of $\mathscr{W}$ with left and right actions on the open submanifolds of smooth arrows $\mathscr{W}(f_\varepsilon,-), \mathscr{W}(-,f_\varepsilon) \equiv \mathscr{W}(f,-) \times [0,1), \mathscr{W}(-,f) \times [0,1)$. We have the containment of the isotropy Lie group $\mathscr{W}(f,f) \subset \mathscr{K}(\Omega)$. It follows that the infinitesimal neighbourhoods of identity arrow $(f,\delta,\varepsilon) \in \mathscr{W}(f,-) \times [0,1)$ generate the connected component $\mathscr{W}(f,-)$ (submanifold) of $\mathscr{W}$ on the Lie group $\mathscr{W}(f,f)$. The generation here is by various means of integrating an infinitesimal quantity to give a finite quantity, which is the sense the Sobolev  spaces are defined. This leads to the statement of the well known equivalence result on derivatives in this setting.
\begin{prop}
The pointwise or classical derivative is equivalent to the weak derivative pointwise almost everywhere on the Lie groupoid $\mathscr{W} \rightrightarrows W^{k,p}(\Omega)$.
\end{prop}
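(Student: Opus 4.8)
\section*{Proof proposal}

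The plan is to reduce the statement to the classical du Bois--Reymond lemma and then transport the resulting almost-everywhere identity along the groupoid structure via the partial $\mathscr{K}(\Omega)$-action. First I would fix a multi-index $\alpha$ with $|\alpha| \leq k$ and a function $f \in W^{k,p}(\Omega)$ that is also classically of class $C^{|\alpha|}$ on some $\Omega' \subset\subset \Omega$; the point is that for such $f$ the ordinary integration-by-parts formula on $\Omega'$ reads $\int_{\Omega'} \varphi\, \partial^\alpha f\, dx = (-1)^{|\alpha|}\int_{\Omega'} f\, D_\alpha\varphi\, dx$ for every $\varphi \in C_c^\infty(\Omega')$, where $\partial^\alpha f$ denotes the pointwise (classical) derivative. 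Comparing with the defining relation in Definition 1.3, this exhibits $\partial^\alpha f$ as an $\alpha$-th weak derivative of $f$ on $\Omega'$.

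Next I would invoke uniqueness of the weak derivative. If $u_1,u_2 \in L^1_{loc}(\Omega')$ both satisfy the defining identity, then $\int_{\Omega'}(u_1-u_2)\varphi\,dx = 0$ for all $\varphi \in C_c^\infty(\Omega')$; since $C_c^\infty(\Omega')$ is dense in $L^1_{loc}(\Omega')$ (the density already used in the paper, together with $L^p(\Omega) \subset L^1_{loc}(\Omega)$), the fundamental lemma of the calculus of variations forces $u_1 = u_2$ almost everywhere. Applying this with $u_1 = \partial^\alpha f$ (classical) and $u_2 = D_\alpha f$ (weak) gives $\partial^\alpha f = D_\alpha f$ a.e.\ on $\Omega'$, hence a.e.\ on $\Omega$ by exhausting $\Omega$ with such relatively compact $\Omega'$.

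It remains to lift this equality onto the Lie groupoid $\mathscr{W} \rightrightarrows W^{k,p}(\Omega)$. Here I would use the partial $\mathscr{K}(\Omega)$-action of Proposition 1.1: for a mollified representative $f_\varepsilon = \phi_\varepsilon(f)$ the arrow $(f_\varepsilon,g) \in \mathscr{W}$ is $C^\infty$, so by the first two steps $\partial^\alpha f_\varepsilon = D_\alpha f_\varepsilon$ identically, while convolution commutes with $D_\alpha$, so $D_\alpha f_\varepsilon = (D_\alpha f)_\varepsilon$. Passing to the net limit along the local flows $f_\varepsilon \to f$, which converge in $W^{k,p}(\Omega)$ by Theorem 1.4, yields $D_\alpha f_\varepsilon \to D_\alpha f$ in $L^p$, so the a.e.\ identification of the two derivatives survives the limit on each source and target fibre $\mathscr{W}(-,g)$, $\mathscr{W}(f,-)$; since the isotropy group $\mathscr{W}(f,f) \subset \mathscr{K}(\Omega)$ acts precisely by these mollifications, the equivalence propagates uniformly over the base. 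I expect the main obstacle to be exactly this last transport step: one must verify that the a.e.-equivalence relation is compatible with the fibrewise weak differential structure on $\mathscr{W}$ introduced in Proposition 1.8 --- that restricting to a local bisection $\sigma : W^{k,p}(\Omega) \to \mathscr{W}$ with $t\circ\sigma$ a diffeomorphism does not enlarge the relevant null sets --- and, as the paper already cautions, to keep the statement to agreement of the two derivatives \emph{where both exist}, rather than equivalence of their existence.
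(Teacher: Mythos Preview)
Your argument is correct in outline but follows a genuinely different route from the paper's. You prove the classical du Bois--Reymond identity first (integration by parts plus uniqueness of weak derivatives), obtain $\partial^\alpha f = D_\alpha f$ a.e.\ on $\Omega$, and only afterwards lift this pointwise statement to the fibres of $\mathscr{W}$ via mollification and the $\mathscr{K}(\Omega)$-action. The paper instead argues \emph{intrinsically} in the groupoid: it invokes the cohomogeneity-one structure of $\mathscr{W}$ to identify the partial groupoid action $\mathscr{W} \times \mathscr{W}(f,-) \to \mathscr{W}(f,-)$ with the isotropy Lie group action $\mathscr{W}(f,f) \times \mathscr{W}(f,-) \to \mathscr{W}(f,-)$, and then reads off the equivalence of derivatives from a Connes-style tangent-groupoid limit of nets in $\mathscr{W}(f,-)\times[0,1)$, namely $(f_\varepsilon \to f,\ g_\varepsilon \to f,\ (f_\varepsilon - g_\varepsilon)/\varepsilon \to X,\ \varepsilon \to 0) = (f,X,0)$, so that the classical derivative appears directly as the boundary object $X(f)=(f,X,0)$ of the smoothing net. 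What your approach buys is a clean, self-contained analytic core that any reader of Sobolev-space theory will recognize; what the paper's approach buys is that the equivalence is phrased from the outset in the language of the Lie groupoid and its exterior space, so no separate ``transport'' step is needed and the result feeds directly into the projectability corollary that follows. The obstacle you flag---checking that restriction to a local bisection does not enlarge null sets---is precisely what the paper sidesteps by never leaving the fibrewise net picture.
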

\begin{proof}
The infinite dimensional open submanifolds $\mathscr{W}(f,-)$ are supports of the left system of Haar measures $\mu^f$; they are approximated using the definition of local bisections $B_\ell(\mathscr{W})$ of the Lie groupoid $\mathscr{W}$ which are identified with the embedded submanifolds of $\mathscr{W}(f,-)$. Given the cohomogeneity-one structure of the space $\mathscr{W}$, the partial action of the Lie groupoid $\mathscr{W} \times \mathscr{W}(f,-) \to \mathscr{W}(f,-)$, which follows the global structure of its exterior space, is seen to be equivalent to the action of the Lie group $\mathscr{W}(f,f) \times \mathscr{W}(f,-) \to \mathscr{W}(f,-)$. This is given above as the invariance of the Sobolev space under the action of the smooth algebra $\mathscr{K}(\Omega) \times W^{k,p}(\Omega) \to W^{k,p}(\Omega)$, which expresses the equivalence of classical derivative and weak derivative by the convergence of the nets: $$(f_\varepsilon \to f, g_\varepsilon \to f,  \frac{f_\varepsilon - g_\varepsilon}{\varepsilon} \to X, \varepsilon \to 0) = (f,X,0),$$ for net of arrows in $\mathscr{W}(f,-) \times [0,1)$, where $X(f) = (f,X,0)$ expresses the classical derivative according to Connes \cite{Connes94}.
\end{proof}
\begin{coro}
The equivalence of the two derivatives results in projectability of the generalized vector fields $X_\varepsilon \to X$ on each $t$-fibre (resp. $s$-fibre) of $\mathscr{W}$.
\end{coro}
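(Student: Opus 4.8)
The plan is to derive the Corollary directly from the preceding Proposition, using the bisection description of the partial $\mathscr{K}(\Omega)$-action. First I would fix $f \in W^{k,p}(\Omega)$ and recall from the Proposition that along the cohomogeneity-one slice $\mathscr{W}(f,-) \times [0,1)$ one has the convergence of arrows $(f_\varepsilon \to f,\ g_\varepsilon \to f,\ \tfrac{f_\varepsilon - g_\varepsilon}{\varepsilon} \to X,\ \varepsilon \to 0) = (f,X,0)$, so that the generalized vector field $X_\varepsilon$ built from the difference quotients of the smoothed nets converges to the field $X$ with $X(f) = (f,X,0)$, which is exactly the classical derivative in the sense of Connes \cite{Connes94}. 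What must be extracted is that this limiting field pushes forward consistently along the target fibres $t^{-1}(f) = \mathscr{W}(f,-)$ and the source fibres $s^{-1}(g) = \mathscr{W}(-,g)$.

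Next I would invoke the theorem that the partial $\mathscr{K}(\Omega)$-action trivializes the source map $s : \mathscr{W} \to W^{k,p}(\Omega)$ and corresponds to local bisections $\sigma : W^{k,p}(\Omega) \to \mathscr{W}$ with $t \circ \sigma$ a diffeomorphism. Differentiating the local flow $\varphi_\varepsilon$ along $\sigma$ produces, for each $\varepsilon$, a vector field on $\mathscr{W}$ that is $s$-related to a field on the base; passing to the limit $\varepsilon \to 0$ and using the continuity of the source and target projections from the locally convex groupoid structure established above yields $ds(X_\varepsilon) \to ds(X)$ and $dt(X_\varepsilon) \to dt(X)$ with the limits basic on $W^{k,p}(\Omega)$. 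Right-invariance of the flow along $\mathscr{W}(f,-)$ gives projectability onto each $t$-fibre; applying the involution $^*$, identified with the inverse map $i$ which carries $\mathscr{W}(f,-)$ onto $\mathscr{W}(-,f)$, transports the same conclusion to each $s$-fibre, which is the assertion.

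The point needing care --- and precisely where the hypothesis of equivalence of the two derivatives is spent --- is that the limit field $X$ be independent of the approximating net and simultaneously compatible with both projections. This holds because the weak derivative is unique up to equality almost everywhere (Section 1) and, by the preceding Proposition, it agrees a.e.\ with the classical derivative on $\mathscr{W}(f,-) \times [0,1)$; hence the field computed along a $t$-fibre and the one computed along the isomorphic $s$-fibre agree under the diffeomorphism $t \circ \sigma$, giving a single well-defined projectable field. I therefore expect the main obstacle to be essentially bookkeeping: verifying that the difference quotients $X_\varepsilon$ are genuinely $s$-related (resp.\ $t$-related) to base fields uniformly in $\varepsilon$, so that the relation survives the limit; once that is secured, projectability of $X_\varepsilon \to X$ on each $t$-fibre and each $s$-fibre is immediate.
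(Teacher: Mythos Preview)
Your argument is coherent at the paper's level of rigor, but it follows a genuinely different route and, more importantly, operates with a different reading of the word ``projectability.'' The paper's proof does not pass through bisections, $s$-related fields, right-invariance, or the involution $i$. Instead it invokes two ingredients imported from the companion paper \cite{OkekeEgwe2023}: first, that each $t$-fibre $\mathscr{W}(f,-)$ (resp.\ $s$-fibre) is a $\mathscr{K}(\Omega)_f$-slice, so that the weak derivative $D_k$ corresponds to a local cross section on the fibre; second, that $W^{k,p}(\Omega)$ embeds as a closed subspace of the distribution space $\mathscr{D}'(\Omega)$, so a distributional vector field $X \in \mathscr{D}(\R^n, T\R^n)$ is realised as the limit of a net $(X_\varepsilon)$ of smooth fields whose complete flows $\varphi_\varepsilon(t,\cdot)$ converge to the flow $\varphi(t,\cdot)$ of $X$. ``Projectability'' in the paper is precisely this Colombeau-style \emph{association} of a generalized vector field to its distributional shadow (see the Remark immediately following the Corollary and the reference to \cite{Grosseretal2001}), not the fibration-theoretic assertion that $X$ is $s$- or $t$-related to a base field on $W^{k,p}(\Omega)$.

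What each approach buys: your bisection-and-pushforward argument stays internal to the Lie groupoid structure already set up in the paper and makes the passage from $t$-fibres to $s$-fibres explicit via the inverse map; it is closer to standard differential-geometric usage. The paper's argument leans on the slice theorem and the distributional embedding, so it connects the statement directly to the generalized-function machinery and to convergence of flows, which is what is actually used downstream for the $1$-densities and Haar system. The potential gap in your proposal is therefore terminological rather than logical: you would need an extra line identifying your notion of projectability with the association-based one the paper intends, or else the conclusion you reach is formally a different statement.
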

\begin{proof}
Given that each $t$-fibre $\mathscr{W}(f,-)$ (resp. $s$-fibre $\mathscr{W}(-,f)$) is a $\mathscr{K}(\Omega)_f$-slice as proved in \cite{OkekeEgwe2023}, which also follows on the action of the Lie groupoid $\mathscr{W}$ on the $t$-fibre above. It follows that the weak derivative $D_k$ corresponds to $W^{k,p}(\Omega)$ as a local cross section on the $t$-fibres $\mathscr{W}(f,-)$.

As shown in \cite{OkekeEgwe2023}, the Sobolev spaces $W^{k,p}(\Omega)$ are closed and embedded subspaces of distributions $\mathscr{D}'(\Omega)$, they are therefore subalgebras of the algebra of distributions. So, a distributional vector field $X \in \mathscr{D}(\R^n, T\R^n)$ is a limit of a net of smooth vector fields $(X_\varepsilon)_\varepsilon$ having the complete flows $\varphi_\varepsilon(t,.)$ converging to $\varphi(t,.)$ as the limit, such that $\varphi(t,.)$ is the flow of $X$. This defines projectability of vector fields through association of generalized vector fields to distributional vector fields, which was shown to be equivalent concepts in \cite{OkekeEgwe2023}. Thus, the pointwise equivalence of weak and classical derivatives in $W^{k,p}(\Omega)$ implies the projectability $X_\varepsilon \to X$ on each $t$-fibre of $\mathscr{W}$.
\end{proof}
\begin{rem}
The idea of association is used here because as noted in \cite{Grosseretal2001}, distributions are embedded into generalized algebra using association and not canonical injection, by making use of a generalized function (or quantity) associated with objects of interest. The choice of the generalized quantity is not unique.
\end{rem}
The above relates to the $1$-densities and the Haar system of measures on $\mathscr{W}$, for the projection of a generalized vector field $u \in \mathcal{G}(\R^n)$ is its distributional shadow $\omega \in \mathscr{D}(\R^n)$, satisfying the equality $\displaystyle \underset{\varepsilon \to 0}\lim \int_\Omega u_\varepsilon \nu = \langle \omega,\nu \rangle$, where $\nu$ is a compactly supported $1$-density on $\Omega \subset \R^n$.

The densities defined from these constitute the system of Haar measures for the Lie groupoid $\mathscr{W} \rightrightarrows W^{k,p}(\Omega)$. The smooth system of Haar measures satisfies the conditions given by Paterson (1999). Hence, a smooth chart as the one defined in \cite{OkekeEgwe20202} can be employed using $t$-fibrewise product as in \cite{Paterson99}. The smooth chart follows the partial multiplication on $L^p(\Omega)$, and its transition functions satisfy the cocycle conditions. Thus, as in \cite{OkekeEgwe20202}, a smooth left Haar system on the Lie groupoid $\mathscr{W} \rightrightarrows W^{k,p}(\Omega)$ is defined as follows.
\begin{defx}
A smooth left Haar system for the Lie groupoid $\mathscr{W} \rightrightarrows W^{k,p}(\Omega)$ is a family $\{\mu^f\}_{f \in W^{k,p}(\Omega)}$ where each $\mu^f$ is a positive, regular Borel measure on the open submanifold $\mathscr{W}(f,-)$ such that: \\ (i) If $(B_f,\psi)$ is a $t$-fibrewise product open subset of $\mathscr{W}$, $B_f \simeq t(B_f) \times W$, for $W \subset \R^k$ and if $\mu_W = \mu|_W$ is Lebesgue measure on $\R^k$, then for each $f \in t(B_f)$, the measure $\mu^f \circ \psi_f$ is equivalent to $\mu_W$, since $\psi_f : B_f\cap \mathscr{W}(f,-) \to \R^k$ is a diffeomorphism and their R-N derivative is the function $\kappa(f,w) = d(\mu^f \circ \psi_f)/d\mu_W(w)$ belonging to $C^\infty(t(B_f) \times W)$ and is strictly positive.\\
(ii) When $\kappa$ is restricted to $f \in W^{k,p}(\Omega)$, we have $\displaystyle \kappa_o(f) = \int_{\mathscr{W}(f,-)}\kappa d\mu^f$, which belongs to $C_c(W^{k,p}(\Omega))$.\\
(iii) For any $\gamma_1 \in \mathscr{W}$ and $F \in C^\infty_c(\mathscr{W})$, we have \[ \int_{\mathscr{W}(s(\gamma_1),-)} F(\gamma_1\gamma_2)d\mu^{s(\gamma_1)}(\gamma_2) = \int_{\mathscr{W}(t(\gamma_1),-)}F(\gamma_3)d\mu^{t(\gamma_1)}(\gamma_3), \] where $\gamma_1,\gamma_2, \gamma_3 \in \mathscr{W}$ and $\gamma_1\gamma_2 = \gamma_3$.
\end{defx}
Given the normalized $\frac{1}{2}$-densities $|\Omega|^{1/2}_\gamma$ forming the fibre over an arrow $\gamma \in \mathscr{W}$, with $s(\gamma) = g, t(\gamma) = f$; a density $\phi \in \Omega(\mathscr{W})_\gamma \subset C^\infty_c(\mathscr{W},\Omega(\mathscr{W}))$ is a map on the Lie groupoid $\mathscr{W}$ taking values in the exterior space $\Omega(\mathscr{W})$ of the groupoid. It defines a functional $\wedge^kT_\gamma(\mathscr{W}(-,f))\otimes \wedge^kT_\gamma(\mathscr{W}(g,-)) \to \mathbb{C}$ (or $\R$). The convolution algebra is defined over 1-densities on the external space $\Omega(\mathscr{W})$ of the Lie groupoid $\mathscr{W}$ based on the multiplication (direct product) of the densities following the composition rules of the groupoid arrows. Hence, a convolution algebra of the groupoid $\mathscr{W} \rightrightarrows W^{k,p}(\Omega)$ is defined on the space of sections $C^\infty_c(\mathscr{W},\Omega(\mathscr{W})) \subset L^2(\mathscr{W})$ of the line bundle, with the convolution product $\phi * \varphi$ for $\phi,\varphi \in C^\infty_c(\mathscr{W},\Omega(\mathscr{W}))$ given as \[ \phi * \varphi(\gamma) = \int_{\gamma_1\circ \gamma_2 = \gamma}\phi(\gamma_1)\varphi(\gamma_2) = \int_{\mathscr{W}(t(\gamma),-)}\phi(\gamma_1)\varphi(\gamma_1^{-1}\gamma). \]
This is an integral of sections on the manifold $\mathscr{W}(t(\gamma),-)$ since $\phi(\gamma_1)\varphi(\gamma_1^{-1}\gamma)$ is a 1-density.

So, the convolution algebra of $C^\infty(\mathscr{W},\Omega(\mathscr{W}))$ encodes transformations or partial symmetries of the partial $^*$-algebra on base space in a smooth and integrable way. The representation of the Lie groupoid is given in the sequel.

\section{Representation of the Lie Groupoid}
The representation of the Lie groupoid $\mathscr{W} \rightrightarrows W^{k,p}(\Omega)$ follows the definitions in \cite{OkekeEgwe20202}. The background is from \cite{Renault80}, \cite{Hahn78}, and \cite{Paterson99}. For the purpose of showcasing the simplification achieved through the Lie groupoid characterisation, there will be two representations. The first will be the representation of the Lie groupoid on the Hilbert bundle $\mathcal{H}$, whereby the arrows will define unitary elements of the bundle space. The second is the representation of the Lie groupoid algebra $C(\mathscr{W})$, the sections defining the groupoid algebra, on the sections of the Hilbert bundle $\mathcal{H}$. The identification of the two sections will lead to the definition of the left regular representation of the Lie groupoid $\mathscr{W}$ on the Hilbert bundle. The regular representations is usually employed to define the $C^*$-algebra representation of the Lie groupoid. This will be given in a different paper with application.

\subsection{The Unitary Representation of the Lie Groupoid}

The simple containment $C^\infty_c(\mathscr{W},\Omega(\mathscr{W})) \subset L^2(\mathscr{W})$ of the line bundles motivates the representation of the Lie groupoid and its algebra on the Hilbert bundle over the base space. Given the Lie groupoid $\mathscr{W} \rightrightarrows W^{k,p}(\Omega)$, each identity object $(\delta,f) \in W^{k,p}(\Omega)$ corresponds to a Haar measure $\mu^f$ supported on $\mathscr{W}(f,-)$ (or $\mu_f$ supported on $\mathscr{W}(-,f)$). A representation $\ell$ of the locally convex Lie groupoid $\mathscr{W}$ is defined on the Hilbert bundle $\mathcal{H} = \{H_f = L^2(\mathscr{W}(f,-),\mu^f): f \in W^{k,p}(\Omega)\}$.

Just as in \cite{OkekeEgwe20202}, the representation of $\mathscr{W} \rightrightarrows W^{k,p}(\Omega)$ is given as a triple $(W^{k,p}(\Omega),\mathcal{H},\nu)$, where $W^{k,p}(\Omega)$ a locally convex Hausdorff space, and $\nu$ is an invariant or quasi-invariant probability measure on $W^{k,p}(\Omega)$, and $\mathcal{H} = \{H_f = L^2(\mathscr{W}(f,-),\mu^f): f \in W^{k,p}(\Omega)\}$ is the collection of Hilbert spaces indexed by the elements of $W^{k,p}(\Omega)$.

Given that the basic measure $\nu$ is quasi-invariant, and the $\nu$-corresponding measures $m, m^2$ on $\mathscr{W}, \mathscr{W}^{(2)}$ satisfy the requirements for measurability of inversion and product. The representation $\ell$ of the Lie groupoid $\mathscr{W} \rightrightarrows W^{k,p}(\Omega)$ on the Hilbert space $\mathcal{H} = \{H_f = L^2(\mathscr{W}(f,-),\mu^f): f \in W^{k,p}(\Omega)\}$--the $L^2$-space of the Hilbert bundle over $W^{k,p}(\Omega)$--is done in such way that each arrow $\gamma = (f,g)$ defines a unitary element $\ell(\gamma) : H_{s(\gamma)} \to H_{t(\gamma)}$, in keeping with multiplication of arrows on $\mathscr{W}$; such that $\gamma_1\gamma_2 \; \implies \; \ell(\gamma_1)\circ \ell(\gamma_2) = \ell(\gamma_1\gamma_2)$ a.e. The representation is defined as follows.
\begin{defx}
A representation of the Lie groupoid $\mathscr{W} \rightrightarrows W^{k,p}(\Omega)$ is defined by the Hilbert bundle $(W^{k,p}(\Omega),\{H_f\},\nu)$ where $\nu$ is a quasi-invariant measure on $W^{k,p}(\Omega)$ (defining the associated measures $m, m^{-1}, m^2, m_o$), and each arrow $\gamma \in \mathscr{W}$ defines a unitary element $$\ell(\gamma) : H_{s(\gamma)} \to H_{t(\gamma),}$$ satisfying the following properties. \\ (i) $\ell(f)$ is the identity map on $H_f$ for all $f \in W^{k,p}(\Omega)$; \\ (ii) $\ell(\gamma_1\gamma_2) = \ell(\gamma_1)\ell(\gamma_2)$ for $m^2$-a.e. $(\gamma_1,\gamma_2) \in \mathscr{W}^{(2)}$; \\ (iii) $\ell(\gamma)^{-1} = \ell(\gamma^{-1})$ for $m$-a.e. $\gamma \in \mathscr{W}$; \\ (iv) for any pair of maps $\varphi, \psi \in L^2(W^{k,p}(\Omega),\{H_f\},\nu)$, the function \[\gamma \mapsto \langle \ell(\gamma)\varphi(s(\gamma)), \psi(t(\gamma))\rangle  \] is $m$-measurable on $\mathscr{W}$.
\end{defx}
The notations $(W^{k,p}(\Omega),\mathcal{H},\nu)$, $(\nu,\mathcal{H},\ell)$, or simply $\ell$ when $\nu$ and $\mathcal{H} = \{H_f : f \in W^{k,p}(\Omega)\}$ is understood, are used for the representation. The inner product is well defined since $\varphi(s(\gamma)) \in H_{s(\gamma)}$ and translated by $\ell(\gamma)$ to $\ell(\gamma)\varphi(s(\gamma)) \in H_{t(\gamma)}$, and $\psi(t(\gamma)) \in H_{t(\gamma)}$. So the inner product defined is on the fibre $H_{t(\gamma)}$.

The unitary representation of the Lie groupoid $\mathscr{W}$ is used to define the regular representation of the Lie groupoid algebra $C_c(\mathscr{W})$ as an integral representation defined by the integration of $\gamma \mapsto \varphi(\gamma)\ell(\gamma)$ over $\mathscr{W}(f,-)$, for $f \in W^{k,p}(\Omega), \varphi \in C_c(\mathscr{W})$, with respect to $\mu^f$, which is a pooling together for all $f \in W^{k,p}(\Omega)$, resulting in a bundle of Hilbert spaces $\{H_f\}_{f \in W^{k,p}(\Omega)}$ with a probability measure $\nu$ on the space of units $W^{k,p}(\Omega)$.

The pooling together requires a modular function because it is achieved through integrating with respect to the probability measure $\nu$ on $W^{k,p}(\Omega)$ possessing the \emph{quasi-invariant} property. the modular function is needed for the representation as defined in \cite{OkekeEgwe20202}. The associated measures $m, m^{-1}, m^2, m_o$ to $\nu$ and $\mu^f$ are also defined there for the representations of the Lie groupoid algebra $C_c(\mathscr{W})$.

\subsection{The Representation of the Lie Groupoid Algebra $C_c(\mathscr{W})$}
Since the local bisections of the Lie groupoid has been shown to encode the local or partial symmetries defined by the weak differential operators giving the local dynamics of the action of the smooth algebra on the base space, the definition of sections on the Hilbert bundle are shown to correspond to the sections defined by $1$-densities on the Lie groupoid. The section of the bundle and its net which determine the inner product norm are defined following \cite{Paterson99}.
\begin{defx}
A function $\varphi : W^{k,p}(\Omega) \to \underset{f \in W^{k,p}(\Omega)}\bigsqcup L^2(\mathscr{W}(f,-),\mu^f)$ defined by $f \mapsto \varphi(f) \in L^2(\mathscr{W}(f,-),\mu^f)$ is called a \emph{section} of the bundle.
\end{defx}
The image of the sections are functions defined on the arrows terminating at $f \in W^{k,p}(\Omega)$, which are integrable. This makes them element of $C^\infty(\mathscr{W},\Omega(\mathscr{W})) \simeq C^\infty(U)$. The latter are diffeomorphisms on an open subspace $U \subset W^{k,p}(\Omega)$, isomorphic to the subspaces corresponding to local bisections $B_\ell(\mathscr{W})$. A net of smooth sections which has a finer structure and is more suited to the distribution space replaces a sequence of sections as used in \cite{Paterson99}. We define a fundamental net as in \cite{OkekeEgwe20202}, following \cite{Paterson99}, and proof a result from \cite{OkekeEgwe20202} in this context as follows.
\begin{defx}
A net $(\varphi_\tau)$ of sections is \emph{fundamental} whenever for each pair of indices $\eta,\tau$ the function $\displaystyle f \mapsto \langle \varphi_\eta(f),\varphi_\tau(f) \rangle = \int \varphi_\eta(t\circ \varphi_\tau(f))\varphi_\tau(f)d\nu(f)$ is $\nu$-measurable on $W^{k,p}(\Omega)$; and for each $f \in W^{k,p}(\Omega)$, the images $\varphi_\tau(f)$ of the net span a dense subspace of $H_f = L^2(\mathscr{W}(f,-),\mu^f)$. 
\end{defx}
\begin{thm}
A net $(\sigma_\tau)$ of local bisection $B_\ell(U)$ of $\mathscr{W} \rightrightarrows W^{k,p}(\Omega)$ corresponds to a fundamental net of the sections of the Hilbert space $L^2(\mathscr{W}(f,-),\mu^f)$.
\end{thm}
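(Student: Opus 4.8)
The plan is to construct the fundamental net of sections directly from the net of local bisections by a smoothing procedure, and then to verify the two defining clauses of a fundamental net (measurability of the pairings, and fibrewise density of the images) in turn. Given a net $(\sigma_\tau)$ of local bisections $\sigma_\tau : U_\tau \to \mathscr{W}$ with $s\circ\sigma_\tau=\mathrm{id}_{U_\tau}$ and $t\circ\sigma_\tau$ a diffeomorphism, each $\sigma_\tau$ picks out, for every $f\in U_\tau$, a single arrow $\sigma_\tau(f)\in\mathscr{W}(f,-)$. By Theorem 2.1 this selection coincides with a local flow of the partial $\mathscr{K}(\Omega)$-action, so I would push a smoothing kernel $\phi_\varepsilon\in\mathscr{K}(\Omega)$ forward along $\sigma_\tau$ and define $\varphi_\tau(f)$ to be the compactly supported $1$-density on $\mathscr{W}(f,-)$ obtained by centring $\phi_\varepsilon$ at $\sigma_\tau(f)$ inside the cohomogeneity-one chart $\mathscr{W}(f,-)\simeq\mathscr{W}(f,f)\times[0,1)$, extended by $0$ off $U_\tau$. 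Each $\varphi_\tau(f)$ then lies in $C^\infty_c(\mathscr{W}(f,-),\Omega(\mathscr{W}))\subset L^2(\mathscr{W}(f,-),\mu^f)=H_f$, so $\varphi_\tau$ is a section of the Hilbert bundle in the required sense; the assignment $\sigma_\tau\mapsto\varphi_\tau$ is the correspondence, and it is reversible because the essential support (equivalently the barycentre) of $\varphi_\tau(f)$, read over the base, recovers $\sigma_\tau$ up to the weak-derivative equivalence, using that $t\circ\sigma_\tau$ is a diffeomorphism.

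For the measurability clause I would work in a $t$-fibrewise product chart $(B_f,\psi)$ supplied by the smooth left Haar system, in which
\[ \langle\varphi_\eta(f),\varphi_\tau(f)\rangle \;=\; \int_{\mathscr{W}(f,-)}\varphi_\eta(\gamma)\,\overline{\varphi_\tau(\gamma)}\;d\mu^f(\gamma) \;=\; \int_{W}\varphi_\eta(f,w)\,\overline{\varphi_\tau(f,w)}\,\kappa(f,w)\;d\mu_W(w), \]
where $\kappa$ is the strictly positive smooth Radon--Nikodym density of $\mu^f\circ\psi_f$ against Lebesgue measure on $W\subset\R^k$. Since $\varphi_\eta(f,\cdot)$ and $\varphi_\tau(f,\cdot)$ are reparametrisations of the fixed bump $\phi_\varepsilon$ by the diffeomorphisms $t\circ\sigma_\eta$, $t\circ\sigma_\tau$ and therefore vary smoothly with $f$, and since the transition cocycle of the chart is smooth, the integrand is jointly smooth in $(f,w)$ with uniformly compact $w$-support; hence the integral depends continuously, and a fortiori $\nu$-measurably, on $f\in W^{k,p}(\Omega)$. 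Quasi-invariance of $\nu$ and the cocycle identity for the Haar system then patch these local continuity statements into a global $\nu$-measurability statement.

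The fibrewise density clause is where the net structure is indispensable, and I expect it to be the main obstacle. Fixing $f$, one must show that $\overline{\mathrm{span}}\{\varphi_\tau(f):\tau\}=H_f=L^2(\mathscr{W}(f,-),\mu^f)$. The mechanism is that local bisections of $\mathscr{W}$ pass through every arrow of each $t$-fibre, while the proper $\mathscr{K}(\Omega)$-action, through the orbit result and the slice property of $\mathscr{W}(f,-)$ proved in \cite{OkekeEgwe2023}, supplies smoothing kernels $\phi_\varepsilon$ of arbitrarily small support centred at any prescribed point of $\mathscr{W}(f,-)$. Provided the indexing of $(\sigma_\tau)$ is taken cofinal enough that every arrow $\gamma\in\mathscr{W}(f,-)$ is traversed at every scale $\varepsilon$ by some $\sigma_\tau$, the family $\{\varphi_\tau(f)\}$ contains all such mollifier bumps; finite linear combinations of them approximate any element of $C_c(\mathscr{W}(f,-))$ uniformly, and $C_c(\mathscr{W}(f,-))$ is dense in $L^2(\mathscr{W}(f,-),\mu^f)$, so the span is dense. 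The delicate point is exactly this richness hypothesis on the net, which has to be extracted from the cohomogeneity-one geometry of $\mathscr{W}(f,-)$ together with the containment $\mathscr{W}(f,f)\subset\mathscr{K}(\Omega)$ of the isotropy group in the smoothing algebra; once it is granted, the two verified clauses show that $(\varphi_\tau)$ is a fundamental net of sections, and the reversibility of $\sigma_\tau\mapsto\varphi_\tau$ noted above gives the full correspondence, completing the proof.
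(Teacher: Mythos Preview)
Your argument takes a genuinely different route from the paper's. You build the correspondence constructively: each bisection value $\sigma_\tau(f)$ is thickened into an $L^2$-function by centring a mollifier $\phi_\varepsilon\in\mathscr{K}(\Omega)$ at that arrow, and you then verify the two clauses of a fundamental net by computing in the $t$-fibrewise product charts of the smooth Haar system (Definition~2.5), using the Radon--Nikodym density $\kappa$ to reduce the pairing to a smooth integrand in $(f,w)$. The paper instead argues abstractly: it identifies the Hilbert fibre $H_f$ directly with the image of a local bisection viewed as a Lie pseudogroup, and deduces $\nu$-measurability of $f\mapsto\langle\varphi_\eta(f),\varphi_\tau(f)\rangle$ not from the Haar charts but from the measurability of $f$ itself, obtained as the inverse image of arrows under the target map $t$ (which is inverse to the section on the base). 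The paper then simply asserts that the images $\sigma(f)$ and $\varphi(f)$ are ``equally integrable functions'' and passes to the Hilbert bundle norm, without an explicit mollifier step.

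What each approach buys: your construction makes precise \emph{what} section of $H_f$ a bisection determines and gives a concrete mechanism for density via approximation in $C_c(\mathscr{W}(f,-))$, at the cost of the cofinality hypothesis you flag. The paper's route is shorter and stays closer to the pseudogroup picture, but it never addresses the density clause at all; it effectively absorbs it into the assertion that $H_f$ ``corresponds to'' the image of $B_\ell(U)$. So the gap you identify as ``the main obstacle'' is not resolved in the paper either---it is left implicit. One small inconsistency in your setup: with $s\circ\sigma_\tau=\mathrm{id}$ (the paper's convention, since $t\circ\sigma$ is the diffeomorphism), one has $\sigma_\tau(f)\in s^{-1}(f)=\mathscr{W}(-,f)$, not $\mathscr{W}(f,-)$; you should either centre the bump in the source fibre and transport by inversion, or reparametrise via $t\circ\sigma_\tau$ before landing in $H_f=L^2(\mathscr{W}(f,-),\mu^f)$.
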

\begin{proof}
Given that $U \subset W^{k,p}(\Omega)$ is a trivialization, then $H_f = L^2(\mathscr{W}(f,-),\mu^f)$ is an open submanifold corresponding to the image of a local bisection $B_\ell(U)$-a Lie pseudogroup. Then the $\nu$-measurability of the net of sections $\sigma_\tau$ follows from the $\nu$-measurability of $f$ which is the inverse image of the arrows under the target map $t : \mathscr{W}(-,f) \to W^{k,p}(\Omega)$ which is inverse of the section on the locally convex space $W^{k,p}(\Omega)$. Thus, the net of local bisections $\sigma_\tau$ corresponds to a net of sections $\varphi_\tau$ on the Hilbert bundle, and the image of the two $\sigma(f), \varphi(f)$ are equally integrable functions.

The resulting Hilbert bundle $\mathcal{H} = L^2(W^{k,p}(\Omega),\{H_f\},\nu)$ is the space of measurable sections $\varphi$ (equivalence classes) defining a $\nu$-integrable function $f \mapsto ||\varphi(f)||^2_2$, with inner product $\displaystyle \langle \varphi,\psi \rangle = \int_{W^{k,p}(\Omega)}\langle \varphi(f),\psi(f) \rangle d\nu(f)$, which defines the norm $\displaystyle ||\varphi||^2 = \int_{W^{k,p}(\Omega)} \langle \varphi(f),\varphi(f) \rangle d\nu(f)$. Thus, the $L^2$-norm is $\displaystyle ||\varphi||^2_2 = \int_{W^{k,p}(\Omega)}||\varphi(f)||^2d\nu(f)$.
\end{proof}

\begin{lem}
The equivalence class $[\varphi]$ of sections is determined by the smooth action of the algebra $\mathscr{K}(\Omega)$ by its smooth net.
\end{lem}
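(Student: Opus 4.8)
The plan is to identify the equivalence relation defining the classes $[\varphi]$ in the Hilbert bundle $\mathcal{H} = L^2(W^{k,p}(\Omega),\{H_f\},\nu)$ with the relation induced by the partial convolution action of $\mathscr{K}(\Omega)$, and then to recover $[\varphi]$ as the limit of the fundamental net that this action produces. Recall that two measurable sections $\varphi,\psi$ represent the same class precisely when $\varphi(f)=\psi(f)$ in $H_f=L^2(\mathscr{W}(f,-),\mu^f)$ for $\nu$-almost every $f \in W^{k,p}(\Omega)$. I would prove the lemma in the two-sided form: $[\varphi]=[\psi]$ if and only if the mollified nets $\varphi_\varepsilon=\phi_\varepsilon\star\varphi$ and $\psi_\varepsilon=\phi_\varepsilon\star\psi$ agree for every $\phi_\varepsilon\in\mathscr{K}(\Omega)$, equivalently share the same limit as $\varepsilon\to 0$.

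For the forward implication, I would lift the convolution action of $\mathscr{K}(\Omega)$ from the base $W^{k,p}(\Omega)$ to the fibres. On each $H_f$ the net acts fibrewise by $\varphi(f)\mapsto\varphi_\varepsilon(f)=\int_{\mathscr{W}(f,-)}\phi_\varepsilon(\,\cdot\,-\gamma)\,\varphi(f)(\gamma)\,d\mu^f(\gamma)$, where the left-invariance condition (iii) of the smooth left Haar system $\{\mu^f\}$ makes the integral meaningful on $\mathscr{W}(f,-)$. By the proposition on orbits of the $\mathscr{K}(\Omega)$-action and the theorem identifying the local flows with the local bisections of $\mathscr{W}$, the resulting section lies in $C^\infty(\mathscr{W},\Omega(\mathscr{W}))\subset L^2(\mathscr{W})$, hence in $\mathcal{H}$; and because the convolution integral is insensitive to modifications of $\varphi$ on $\nu$-null subsets of the base and $\mu^f$-null subsets of the fibres, $\varphi_\varepsilon$ depends only on $[\varphi]$. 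Thus $\phi_\varepsilon\star(\cdot)$ is well defined on classes and $[\varphi]=[\psi]$ forces $\varphi_\varepsilon=\psi_\varepsilon$ for all $\varepsilon$.

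For the converse, the approximate-identity properties $\text{supp}\,\phi_\varepsilon\subset\bar B(0,\varepsilon)$ and $\int\phi_\varepsilon=1$, combined with the invariance of $W^{k,p}(\Omega)$ under the $\mathscr{K}(\Omega)$-action (the stability theorem for $\Gamma$) and the orbit proposition, yield the fibrewise $L^2$-convergence $\varphi_\varepsilon(f)\to\varphi(f)$ for $\nu$-a.e. $f$; and the preceding theorem shows that $(\varphi_\tau)$ is a fundamental net whose images span each $H_f$ densely. Hence if two sections generate the same mollified net their limits agree $\nu$-a.e., so $[\varphi]=[\psi]$, and the class $[\varphi]$ is the unique limit in $\mathcal{H}$ of the fundamental net $(\varphi_\tau)$ attached to $\mathscr{K}(\Omega)$. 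This is exactly the assertion that $[\varphi]$ is determined by the smooth action of $\mathscr{K}(\Omega)$.

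The step I expect to be the main obstacle is the fibrewise lift in the forward implication: one must verify that convolution on the base $W^{k,p}(\Omega)$ genuinely descends to a measure-compatible operation on the fibres $\mathscr{W}(f,-)$ --- that is, that the Haar system $\{\mu^f\}$ is quasi-invariant under the mollifier flow, so that the modular cocycle (the Radon--Nikodym factor $\kappa(f,w)$ of the Haar-system definition) is absorbed and $\phi_\varepsilon\star(\cdot)$ preserves $L^2$-classes relative to the quasi-invariant measure $\nu$. This rests on condition (iii) of the smooth left Haar system together with the cohomogeneity-one identification $\mathscr{W}(f_\varepsilon,-)\equiv\mathscr{W}(f,-)\times[0,1)$; granting these, both implications reduce to the standard convergence estimates for mollifiers.
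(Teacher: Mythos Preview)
Your argument is sound and, in fact, more carefully structured than the paper's, but it takes a genuinely different route. The paper's proof is very brief: it invokes the net action on fibre translations $(\phi_\tau)(\gamma)\gamma^{-1}$ to produce a net of bisections lying in $[\varphi]$, appeals to the $\mathscr{K}(\Omega)$-invariance of the Lie pseudogroup $B_\ell(U)$ of local bisections, and then observes that the class $[\varphi]$ is characterised by the $\nu$-measurability of the inner products $\langle \varphi(f),\varphi_\tau(f)\rangle$ against the fundamental net together with the density of the span in each $H_f$. In other words, the paper uses the Dixmier--Paterson device: a section's class is \emph{defined} by its measurable pairings with a fundamental net, and since the fundamental net here arises from the $\mathscr{K}(\Omega)$-action (by the preceding theorem), the class is determined by that action.

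You instead set up an explicit biconditional, $[\varphi]=[\psi]$ iff $\phi_\varepsilon\star\varphi=\phi_\varepsilon\star\psi$ for all $\varepsilon$, and argue each direction by well-definedness of mollification on null sets and by approximate-identity convergence $\varphi_\varepsilon\to\varphi$ fibrewise. This is the more analytic approach: the class is recovered as the \emph{limit} of the mollified net rather than via inner-product measurability against it. Both arguments ultimately rest on the same preceding theorem (local bisections $\leftrightarrow$ fundamental nets), so neither is strictly more elementary; yours is more self-contained and makes the role of the approximate-identity property explicit, while the paper's stays closer to the Hilbert-bundle formalism and avoids the fibrewise-lift issue you flag by working with inner products rather than with convolution of sections directly. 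The obstacle you identify---compatibility of the mollifier flow with the Haar system and the quasi-invariant measure $\nu$---is real, and the paper does not address it either; it is implicitly absorbed into the assertion that the net action on fibre translations is well defined.
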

\begin{proof}
This follows from the net action on the fibre translation $(\phi_\tau)(\gamma)\gamma^{-1}$ defining the net of bisections $(\varphi_\tau)$ in $[\varphi]$; or from our proof of the local bisections $B_\ell(U)$ which is a Lie pseudogroup to be invariant under $\mathscr{K}(\Omega)$-action.
Thus, the equivalence class $[\varphi]$ of a section $\varphi$ has the inner product action $\langle \varphi(f),\varphi_\tau(f) \rangle$ of a fundamental net $(\varphi_\tau)$ in terms of measurability, in keeping with Theorem 2.1.

The fundamental net $(\varphi_\tau)$ spans a dense subspace of each $L^2(\mathscr{W}(f,-),\mu^f)$ on any $f \in L^p(\Omega)$, and defines the equivalence classes of sections of the Hilbert bundle $\mathcal{H} = L^2(W^{k,p}(\Omega),\{H_f\},\nu)$.
\end{proof}

\begin{rem}
The dimension of the Hilbert bundle is constant only when the Hilbert space $L^2(\mathscr{W}(f,-),\mu^f)$ is the same for all $f \in W^{k,p}(\Omega)$. Cf. \cite{Paterson99}. The locally convex Lie groupoid $\mathscr{W} \rightrightarrows W^{k,p}(\Omega)$ has fibres which are not all of same dimension since the $t$-fibres $\mathscr{W}(f,-)$ are not the same for every $f \in W^{k,p}(\Omega)$. The fundamental nets result from the $\mathscr{K}(\Omega)$-action on universal multipliers.
\end{rem}

Subsequently, we represent the sections defining the Lie groupoid algebra on the sections of the Hilbert bundle. So, let $C_c(\mathscr{W})$ be space of continuous (smooth) sections of $\mathscr{W}$, we define a representation of the algebra on the bundle $\mathcal{H}$ by identification of each $\varphi \in C_c(\mathscr{W})$ with the section $f \to \varphi|_{\mathscr{W}(f,-)} \in C_c(\mathscr{W}(f,-)) \subset L^2(\mathscr{W}(f,-))$.
\begin{defx}
Given any pair of sections $\varphi,\psi \in C_c(\mathscr{W})$, define a $\nu$-measurable map by inner product $\gamma \mapsto \langle \varphi(\gamma), \psi(\gamma) \rangle, C_c(\mathscr{W}) \to L^2(\mathcal{H})$. This is well defined since $\varphi\overline{\psi} \in B(\mathcal{H})$ and the restriction $(\varphi\overline{\psi})|_{W^{k,p}(\Omega)} = (\varphi\overline{\psi})^o \in B_c(W^{k,p}(\Omega))$.
\end{defx}
The generation of the Hilbert bundle from these sections is shown in \cite{OkekeEgwe20202}. Cf. \cite{Paterson99}, Definition 2.2.1. Given this identification of $C_c(\mathscr{W})$ with the sections of $L^2(\mathcal{H})$, where $\mathcal{H} = \{H_f : f \in W^{k,p}(\Omega)\}$ is a Hilbert bundle, the inner product $\langle \varphi_1(f),\varphi_2(f) \rangle$ is interpreted to be the product of two sections defined above, in keeping with multiplication of arrows, as $\displaystyle \int (\varphi_1 \star \varphi_2)(f)d\nu(f) = \int \varphi_1(t \circ \varphi_2(f))\varphi_2(f)d\nu(f)$. The main result on the partial dynamical systems of the locally convex partial $^*$-algebra $\Gamma$ now follows.

\begin{thm}
The convolution product with a fundamental net defines the partial dynamical system of the locally convex partial $^*$-algebra $\Gamma$ given by the regular action of Lie groupoid $\mathscr{W}$ on the $t$-fibres.
\end{thm}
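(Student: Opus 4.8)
\emph{Proof strategy.} The plan is to realise the partial dynamical system $(L^p(\Omega),\mathscr{K}(\Omega),W^{k,p}(\Omega),\phi_\varepsilon)$ inside the regular representation of $\mathscr{W}$ on the Hilbert bundle $\mathcal{H}=\{H_f=L^2(\mathscr{W}(f,-),\mu^f):f\in W^{k,p}(\Omega)\}$ and then to identify the net of convolution operators attached to a fundamental net $(\varphi_\tau)$ with that system. First I would set up the regular representation: for $\varphi\in C_c(\mathscr{W})\simeq C^\infty_c(\mathscr{W},\Omega(\mathscr{W}))$ and a section $\xi$, the convolution product $\varphi*\xi$ restricted to the $t$-fibre $\mathscr{W}(f,-)$ is the fibrewise integrated operator $\pi(\varphi)=\int_{\mathscr{W}(f,-)}\varphi(\gamma)\,\ell(\gamma)\,d\mu^f(\gamma)$ of the unitary representation $\ell$ of Definition~3.1. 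This is well defined because $\varphi(\gamma_1)\xi(\gamma_1^{-1}\gamma)$ is a $1$-density along $\mathscr{W}(t(\gamma),-)$, and condition (iii) of the smooth left Haar system in Definition~2.5 is exactly the cocycle identity making the left translations $L_\gamma:\mathscr{W}(s(\gamma),-)\to\mathscr{W}(t(\gamma),-)$ induce unitaries $\ell(\gamma):H_{s(\gamma)}\to H_{t(\gamma)}$ with $\ell(\gamma_1\gamma_2)=\ell(\gamma_1)\ell(\gamma_2)$; hence $\pi$ is precisely the regular action of the groupoid algebra on the $t$-fibres.

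Next I would feed in a fundamental net. By Theorem~3.4 a fundamental net $(\varphi_\tau)$ of sections of $\mathcal{H}$ corresponds to a net of local bisections $(\sigma_\tau)\subset B_\ell(U)$ of $\mathscr{W}$, and by Theorem~2.1 those bisections coincide with the local flows produced by the partial $\mathscr{K}(\Omega)$-action, i.e.\ with the smoothing nets $\phi_\varepsilon$; by Lemma~3.5 the equivalence class $[\varphi_\tau]$ is determined exactly by that $\mathscr{K}(\Omega)$-action on universal multipliers. Under the $t$-fibrewise trivialisation $B_f\simeq t(B_f)\times W$ with Radon--Nikodym derivative $\kappa(f,w)$ of Definition~2.5, I would show that $\pi(\varphi_\tau)$ restricted to $H_f$ reduces to pull-back along the diffeomorphism $t\circ\sigma_\tau:W^{k,p}(\Omega)\to W^{k,p}(\Omega)$, twisted by a smooth strictly positive scalar assembled from $\kappa$ and the modular function of the quasi-invariant measure $\nu$. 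Since $t\circ\sigma_\tau$ is exactly the smoothing map $f\mapsto\phi_\varepsilon(f)=f_\varepsilon$ of Proposition~1.5, the net $(\pi(\varphi_\tau))$ implements on each $t$-fibre the flow of the partial dynamical system; and the projectability $X_\varepsilon\to X$ of the generalized vector fields from Corollary~2.3 identifies the infinitesimal generator of that flow with the weak differential operator associated to $W^{k,p}(\Omega)$.

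Finally I would pass to the limit and read off the algebraic content. The convergence $f_\varepsilon\to f$ in $W^{k,p}(\Omega)$ (Proposition~1.5 with the invariance recorded in Theorem~1.6) and the continuity of the pointwise product give $(f_\varepsilon,g)\to(f,g)$ inside $\Gamma$, so $(\pi(\varphi_\tau))_\tau$ converges strongly to the identity of $\mathcal{H}$ while staying compatible with the relation $\mathscr{W}$ and with the involution $^*$; this is precisely the assertion that $(\pi(\varphi_\tau))_\tau$ is a partial dynamical system on $W^{k,p}(\Omega)\subset L^p(\Omega)$. Since the groupoid characterisation of Theorem~1.19 identifies this system with the locally convex partial $^*$-algebra $(L^p(\Omega),\mathscr{W},\cdot,^*,\tau)$, the convolution product with the fundamental net defines the partial dynamical system of $\Gamma$, realised as the regular action of $\mathscr{W}$ on the $t$-fibres, which is the assertion to be proved.

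The step I expect to be the main obstacle is the reduction in the second paragraph: proving that fibrewise convolution by $\varphi_\tau$ on $L^2(\mathscr{W}(f,-),\mu^f)$ is genuinely conjugate --- and not merely covariant --- to pull-back along $t\circ\sigma_\tau$. This forces one to unwind simultaneously the three identifications, namely sections of $\mathcal{H}$, local bisections of $\mathscr{W}$, and functions on the base $W^{k,p}(\Omega)$, and to verify that the Haar Radon--Nikodym derivative $\kappa$ and the modular function of $\nu$ together contribute only a positive smooth scalar, so that the dynamics transported to $\mathcal{H}$ is genuinely the flow $\phi_\varepsilon$ rather than a twisted version of it.
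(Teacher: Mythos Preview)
Your strategy is in the same spirit as the paper's argument, but it is considerably more elaborate than what the paper actually does. The paper's proof is essentially declarative: it writes the convolution formula
\[
\phi_\tau * \varphi(\gamma) = \int_{\mathscr{W}(t(\gamma),-)}\phi_\tau(\gamma_1)\varphi(\gamma_1^{-1}\gamma),
\]
observes that with a fundamental net $\phi_\tau$ this is a net of integrals of sections, asserts that this net represents the partial dynamical system of $\Gamma$, and then cites the convergence in $W^{k,p}(\Omega)\subset L^p(\Omega)\hookrightarrow\mathscr{D}'(\Omega)$ to conclude that the net of weak quantities converges to the classical derivatives and flows of $\Gamma$. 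That is the entire proof.

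By contrast, you unpack the identification in detail: you realise $\pi(\varphi_\tau)$ as an integrated form of the unitary representation $\ell$, pass through the correspondence fundamental net $\leftrightarrow$ local bisections $\leftrightarrow$ $\mathscr{K}(\Omega)$-flows (Theorems~2.1 and~3.4, Lemma~3.5), and then attempt to show that fibrewise convolution is conjugate to pull-back along $t\circ\sigma_\tau$ up to a positive scalar built from the Radon--Nikodym derivative $\kappa$ and the modular function. None of this apparatus appears in the paper's proof; in particular, the paper never invokes $\kappa$, the modular function, or any conjugacy analysis at this point. Your ``main obstacle'' --- verifying that the twist is a genuine scalar so the transported dynamics is exactly $\phi_\varepsilon$ --- is a level of rigor the paper simply does not attempt; it treats the identification as immediate from the form of the convolution integral and the density property of the fundamental net.

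So your route is not wrong, and it threads together the earlier results of the paper in a coherent way, but it supplies far more justification than the paper does. If you are trying to match the paper, you can drop the Radon--Nikodym and modular-function analysis entirely and argue directly from the convolution formula plus the Sobolev-space convergence of the smoothing nets.
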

\begin{proof}
The convolution product $\phi * \varphi$ for $\phi,\varphi \in C^\infty_c(\mathscr{W},\Omega(\mathscr{W}))$ given as \[ \phi * \varphi(\gamma) = \int_{\gamma_1\circ \gamma_2 = \gamma}\phi(\gamma_1)\varphi(\gamma_2) = \int_{\mathscr{W}(t(\gamma),-)}\phi(\gamma_1)\varphi(\gamma_1^{-1}\gamma), \]
which is an integral of sections on the manifold $\mathscr{W}(t(\gamma),-)$ since $\phi(\gamma_1)\varphi(\gamma_1^{-1}\gamma)$ is a 1-density, as given above.
Given a fundamental net $\phi_\tau$ on $\mathscr{W}(t(\gamma),-)$ generating a dense subspace, the convolution product is given as
\[ \phi_\tau * \varphi(\gamma) = \int_{\gamma_1\circ \gamma_2 = \gamma}\phi_\tau(\gamma_1)\varphi(\gamma_2) = \int_{\mathscr{W}(t(\gamma),-)}\phi_\tau(\gamma_1)\varphi(\gamma_1^{-1}\gamma), \]
which is net of integral of sections representing the partial dynamical system of $\Gamma$. Since the convergence is in the Sobolev space $W^{k,p}(\Omega) \subset L^p(\Omega)$ embedded in the distribution space $\mathscr{D}'(\Omega)$ by the smooth action, it is a net of weak quantities converging to the classical derivatives and flows of the locally convex partial $^*$-algebra $\Gamma \subset L^p(\Omega) \times L^p(\Omega)$.
\end{proof}
The regular representation of the Lie groupoid $\mathscr{W}$ shows its regular action on the fibres and the Hilbert bundle.

\subsection{The Left Regular Representation of $\mathscr{W}$}
The natural representation of the Lie groupoid $\mathscr{W}$ is the left regular $\ell_r$ defined on the fibres by $H_f = L^2(\mathscr{W}(f,-))$ (see \cite{Paterson99}, p.107; \cite{Renault80}, p.55). This left regular representation is now given as a result and shown to be unitary.
\begin{prop}
The map $\ell_r(\gamma) : H_{s(\gamma)} \to H_{t(\gamma)}$ defined by $(\ell_r(\gamma))(\varphi)(\gamma_1) = \varphi(\gamma^{-1}\gamma_1)$, with $\varphi \in L^2(\mathscr{W}(s(\gamma),-))$ and $\gamma_1 \in \mathscr{W}(t(\gamma),-)$, is a unitary representation.
\end{prop}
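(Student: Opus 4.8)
The plan is to unpack ``unitary representation'' into three claims and verify them in turn: that each $\ell_r(\gamma)$ is a well-defined map $H_{s(\gamma)}\to H_{t(\gamma)}$ descending to $L^2$-classes, that it is an isometry onto (hence unitary), and that the family $\{\ell_r(\gamma)\}_{\gamma\in\mathscr{W}}$ obeys conditions (i)--(iv) of the representation definition above. First I would record the composability underlying the formula: writing $s(\gamma)=g$, $t(\gamma)=f$, for $\gamma_1\in\mathscr{W}(f,-)$ we have $t(\gamma_1)=f=s(\gamma^{-1})$, so $\gamma^{-1}\gamma_1$ is defined and $t(\gamma^{-1}\gamma_1)=t(\gamma^{-1})=g$, whence $\gamma^{-1}\gamma_1\in\mathscr{W}(g,-)$ and $(\ell_r(\gamma)\varphi)(\gamma_1)=\varphi(\gamma^{-1}\gamma_1)$ is meaningful for $\varphi\in L^2(\mathscr{W}(g,-),\mu^g)$. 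Since left translation $L_\gamma\colon\mathscr{W}(g,-)\to\mathscr{W}(f,-)$, $\gamma_2\mapsto\gamma\gamma_2$, is a bijection with inverse $L_{\gamma^{-1}}$, the operator $\ell_r(\gamma)$ is pullback along $L_{\gamma^{-1}}$, and it will descend to classes once $\mu^f$ and $(L_{\gamma^{-1}})_{*}\mu^g$ are seen to agree.

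The key step will be the isometry, which is precisely the left-invariance property (iii) of the smooth left Haar system $\{\mu^h\}$. Applying (iii) to the arrow $a:=\gamma^{-1}$ (so $s(a)=f$, $t(a)=g$) and, for $\varphi$ a restriction of an element of $C^\infty_c(\mathscr{W})$, to the test function $F=|\varphi|^2$ gives
\[
\|\ell_r(\gamma)\varphi\|_{H_f}^2=\int_{\mathscr{W}(f,-)}|\varphi(\gamma^{-1}\gamma_1)|^2\,d\mu^{f}(\gamma_1)=\int_{\mathscr{W}(g,-)}|\varphi(\gamma_3)|^2\,d\mu^{g}(\gamma_3)=\|\varphi\|_{H_g}^2 .
\]
By density of such $\varphi$ in $H_g$ this extends to all of $H_g$, and polarization then shows $\ell_r(\gamma)$ preserves inner products. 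Surjectivity I would obtain from the fact that $\ell_r(\gamma^{-1})$ is a two-sided inverse: indeed $(\ell_r(\gamma^{-1})\ell_r(\gamma)\varphi)(\gamma_1)=(\ell_r(\gamma)\varphi)(\gamma\gamma_1)=\varphi(\gamma^{-1}\gamma\gamma_1)=\varphi(\gamma_1)$, using $\gamma^{-1}\gamma=1_{g}$ and $1_{g}\gamma_1=\gamma_1$ for $t(\gamma_1)=g$, and symmetrically on the other side; hence each $\ell_r(\gamma)$ is unitary.

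Next I would check the representation axioms. For (i), when $f$ is a unit one has $s(f)=t(f)=f$ and $\ell_r(f)\varphi(\gamma_1)=\varphi(f^{-1}\gamma_1)=\varphi(f\gamma_1)=\varphi(\gamma_1)$, so $\ell_r(f)=\mathrm{id}_{H_f}$. For (ii), when $s(\gamma_1)=t(\gamma_2)$,
\[
(\ell_r(\gamma_1)\ell_r(\gamma_2)\varphi)(\gamma_3)=\varphi(\gamma_2^{-1}\gamma_1^{-1}\gamma_3)=\varphi((\gamma_1\gamma_2)^{-1}\gamma_3)=(\ell_r(\gamma_1\gamma_2)\varphi)(\gamma_3),
\]
and here the equality holds literally, not merely $m^2$-almost everywhere, because the Haar system makes left translations honest bijections; moreover $H_{s(\gamma_2)}\xrightarrow{\ell_r(\gamma_2)}H_{t(\gamma_2)}=H_{s(\gamma_1)}\xrightarrow{\ell_r(\gamma_1)}H_{t(\gamma_1)}$ matches $H_{s(\gamma_1\gamma_2)}\to H_{t(\gamma_1\gamma_2)}$. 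Condition (iii), $\ell_r(\gamma)^{-1}=\ell_r(\gamma^{-1})$, is the inverse identity already used. For (iv) I would write, for $\varphi,\psi\in L^2(W^{k,p}(\Omega),\{H_f\},\nu)$,
\[
\langle\ell_r(\gamma)\varphi(s(\gamma)),\psi(t(\gamma))\rangle=\int_{\mathscr{W}(t(\gamma),-)}\varphi(s(\gamma))(\gamma^{-1}\gamma_1)\,\overline{\psi(t(\gamma))(\gamma_1)}\,d\mu^{t(\gamma)}(\gamma_1),
\]
and conclude $m$-measurability from the smoothness of the Haar system ($\kappa\in C^\infty$, $\kappa_o\in C_c$), the joint measurability of composition and inversion on $\mathscr{W}$, and Fubini for the associated measures $m$ and $m^2$.

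The hard part will not be any single computation but making the ``$L^2$-class'' and almost-everywhere statements rigorous in the light of the Remark that the $t$-fibres $\mathscr{W}(f,-)$ need not all share the same dimension: one must be sure that $L_\gamma$ carries $\mathscr{W}(s(\gamma),-)$ \emph{exactly onto} $\mathscr{W}(t(\gamma),-)$ --- true groupoid-theoretically, with inverse $L_{\gamma^{-1}}$ --- and intertwines $\mu^{s(\gamma)}$ with $\mu^{t(\gamma)}$ \emph{without} a modular factor, which is the defining feature of a \emph{left} Haar system and is exactly axiom (iii). Once that is in place, the unitarity of each $\ell_r(\gamma)$ and the verification of axioms (i)--(iv) are formal, and the proof closes by appeal to \cite{OkekeEgwe20202}, \cite{Paterson99} and \cite{Renault80} for the ambient measure-theoretic setup.
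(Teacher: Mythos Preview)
Your proposal is correct and follows essentially the same approach as the paper: unitarity of $\ell_r(\gamma)$ comes from the left-invariance of the Haar system (the paper phrases this as $\ell_r(\gamma)$ being a bijective isometry on $L^1$ induced by translation, then restricting to $L^2$ and citing \cite{Paterson99}, p.~34, whereas you invoke axiom (iii) of the Haar-system definition directly), and the representation axioms are then dispatched. Your version is more explicit---you check conditions (i)--(iv) one by one and track composability carefully---while the paper's proof is a short sketch that simply asserts the remaining conditions hold; but the underlying mechanism is the same.
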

\begin{proof}
First, $\ell_r(\gamma)$ is an extension of a bijective isometry \[L^1(\mathscr{W}(s(\gamma),-),\mu^{s(\gamma)}) \to L^1(\mathscr{W}(t(\gamma),-), \mu^{t(\gamma)}), \varphi \mapsto \gamma * \varphi; \] in the sense that $L^2(\mathscr{W}(s(\gamma),-),\mu^{s(\gamma)}) \subset L^1(\mathscr{W}(s(\gamma),-),\mu^{s(\gamma)})$; and it defines a translative (transitive) action of $\mathscr{W}$ on $t$-fibres.

Second, the restriction of the $\ell_r$ to $L^2(\mathscr{W}(s(\gamma),-),\mu^{s(\gamma)})$ is a representation of $\gamma \in \mathscr{W}$ as a unitary operator $\gamma * \varphi$ given as \[ L^2(\mathscr{W}(s(\gamma),-), \mu^{s(\gamma)}) \to L^2(\mathscr{W}(t(\gamma),-), \mu^{t(\gamma)}), (\gamma * \varphi)(\gamma_1) = \varphi(\gamma^{-1}\gamma_1) = \varphi(\gamma^{-1})\varphi(\gamma_1).\]  The restriction holds for $1 < p \leq \infty$ (see \cite{Paterson99}, p.34). So, $\ell_r$ is a (unitary) representation of $\mathscr{W} \rightrightarrows W^{k,p}(\Omega)$ on the $L^2$-space of the bundle $(L^p(\Omega),\mathcal{H},\nu)$ for it satisfies the other conditions of definition.
\end{proof}

\section{Conclusion}
The aim of this paper was to study the locally convex partial $^*$-algebra $(L^p(\Omega), \Gamma, \cdot,^*)$, with $\Gamma \subset L^p(\Omega) \times L^p(\Omega)$, using the Lie groupoid framework. This was achieved by embedding the partial algebra $(L^p(\Omega), \Gamma, \cdot,^*)$ into the Sobolev space $W^{k,p}(\Omega) \subset L^p(\Omega)$ closed subspace of $L^P(\Omega)$ under the weak derivative. The algebra was successively characterised as the Lie groupoid $\mathscr{W} \rightrightarrows W^{k,p}(\Omega)$ with the space of arrows defined by the relation $\Gamma$ in the Sobolev space $W^{k,p}(\Omega)$. The partial dynamical systems of the locally convex partial $^*$-algebra $(L^p(\Omega), \Gamma, \cdot,^*)$ is found to be the stable flows of the Hilbert bundle $L^2(W^{k,p}(\Omega), \mathcal{H}, \nu)$ under the convergence of the fundamental smooth nets of sections $\varphi_\tau$  of the Hilbert bundle.

Given that the Sobolev space $W^{k,p}(\Omega)$ is dense in $L^p(\Omega)$, the fundamental nets of sections of the Hilbert space $H_f = L^2(\mathscr{W}(f,-),\nu^f)$ having the regular action of the Lie groupoid $\mathscr{W}$ generate dense subspace of the Hilbert space $H_f$ which is also dense in the Sobolev space and in $L^p(\Omega)$.
The translative action of $\mathscr{W}$ on the $t$-fibres which are open subspaces of the locally convex Lie groupoid $\mathscr{W}$ can also be given by the action of the local bisections of the Lie groupoid, which are always diffeomorphic to the open subspaces of a Lie groupoid; that is, $L \longleftrightarrow \sigma$, where $L$ is an open subspace of $\mathscr{W}$ and $\sigma$ is a local bisection of $\mathscr{W}$. The action of the fundamental nets is therefore shown to represent the partial dynamical systems of the locally convex partial $^*$-algebra $(L^p(\Omega), \Gamma, \cdot,^*)$.

The use of the Lie groupoid characterisation $\mathscr{W} \rightrightarrows W^{k,p}(\Omega)$ easily demonstrated the partial dynamical systems of the locally convex partial $^*$-algebra $(L^p(\Omega), \Gamma, \cdot,^*)$ to be dense or contained on the $t$-fibres of the Lie groupoid. The Hilbert bundle was used to represent the partial algebra and its dynamics. To understand the locally convex partial $^*$-algebra $(L^p(\Omega), \Gamma, \cdot,^*)$ and its partial dynamical systems, we just need to concern ourselves with the fundamental nets of the sections of the Hilbert bundle $L^2(\mathcal{H}$.

\bibliographystyle{amsplain}

\providecommand{\bysame}{\leavevmode\hbox to3em{\hrulefill}\thinspace}

\end{document}